\theoremstyle{thmstyleone}%
\newtheorem{theorem}{Theorem}%  meant for continuous numbers
\theoremstyle{thmstyletwo}%
\newtheorem{example}{Example}%
\newtheorem{remark}{Remark}%
\newtheorem{lemma}{Lemma}
\theoremstyle{thmstylethree}%
\renewcommand{|}{\vert}
\begin{document}

\title[A novel class of numerical schemes for FDEs]{A novel class of arbitrary high-order numerical schemes for fractional differential equations}

%%=============================================================%%
%% Prefix	-> \pfx{Dr}
%% GivenName	-> \fnm{Joergen W.}
%% Particle	-> \spfx{van der} -> surname prefix
%% FamilyName	-> \sur{Ploeg}
%% Suffix	-> \sfx{IV}
%% NatureName	-> \tanm{Poet Laureate} -> Title after name
%% Degrees	-> \dgr{MSc, PhD}
%% \author*[1,2]{\pfx{Dr} \fnm{Joergen W.} \spfx{van der} \sur{Ploeg} \sfx{IV} \tanm{Poet Laureate}
%%                 \dgr{MSc, PhD}}\email{iauthor@gmail.com}
%%=============================================================%%

\author[1]{\fnm{Peng} \sur{Ding}}\email{dingpeng@stu.xmu.edu.cn}

\author*[2]{\fnm{Zhiping} \sur{Mao}}\email{zmao@eitech.edu.cn}
%\equalcont{These authors contributed equally to this work.}

%\author[1,2]{\fnm{Third} \sur{Author}}\email{iiiauthor@gmail.com}
%\equalcont{These authors contributed equally to this work.}

\affil[1]{\orgdiv{School of Mathematical Sciences, Fujian Provincial Key Laboratory of Mathematical Modeling and High-Performance Scientific Computing}, \orgname{Xiamen University}, \orgaddress{\city{Xiamen}, \postcode{361005}, \state{Fujian}, \country{China}}}

\affil*[2]{\orgdiv{School of Mathematical Sciences}, 
\orgname{Eastern Institute of Technology}, \orgaddress{\city{Ningbo}, \postcode{315200}, \state{Zhejiang}, \country{China}}}

%\affil[3]{\orgdiv{Department}, \orgname{Organization}, \orgaddress{\street{Street}, \city{City}, \postcode{610101}, \state{State}, \country{Country}}}

%%==================================%%
%% sample for unstructured abstract %%
%%==================================%%

\abstract{It is well-known that designing efficient, high-order, and stable numerical schemes for time non-local or fractional differential equations faces three key challenges: non-locality, low solution regularity, and long-term simulation. Achieving this while minimizing storage costs is particularly difficult, especially when attempting to address all three issues \emph{simultaneously}. 
In this work, we propose a novel class of numerical schemes designed to \emph{simultaneously} address the three key challenges associated with time fractional differential equations (TFDEs). 
To achieve this, we derive an equivalent integer-order \emph{extended parametric differential equation} (EPDE) by dimensional expanding for the TFDE and establish its corresponding stability.
Remarkably, for the resulting EPDE, we provide a rigorous analysis demonstrating that it exhibits high regularity with respect to the extended parameter dimension. This finding motivates us to apply a spectral method for discretizing the extended parametric space, enabling high accuracy.
Consequently, we employ the Jacobi spectral collocation method combined with characteristic decomposition, resulting in $M$ independent integer-order ordinary differential equations (ODEs), where $M$ represents the number of nodes used for the spectral collocation method.
Therefore, it is straightforward to apply any traditional numerical scheme for the resulting $M$ independent integer ODEs.  
In the present work, we utilize the BDF-$k$ ($k=1,\ldots ,5$) formulas for the time discretization and conduct a rigorous stability analysis. Additionally, we provide an error estimate for the fully discretization scheme, demonstrating that the convergence order is $O(\Delta t^{k} + M^{-m})$, where $\Delta t$ represents the time step size. 
More importantly, both theoretical and numerical results reveal that the spectral approximation in the extended parametric space exhibits exponential convergence. This implies that a \emph{small fixed} value of $M$ is sufficient to achieve high accuracy in the $\theta$-direction. In fact, numerical results demonstrate that choosing $ M=30 $ is sufficient to attain an accuracy of approximately $O(10^{-12})$. 
Since $M$ is fixed, we conclude that the computational cost and storage requirements of our proposed algorithm are essentially the same as those for ODEs, with a computational cost of $O(N)$ and a storage requirement of $O(1)$, where $N$ is the total number of time steps. 
We present several numerical examples, including both linear and nonlinear problems, to demonstrate the effectiveness of the proposed method and to validate the theoretical results.}

\keywords{Extended parametric differential equation, BDF-$k$ formulae, Spectral collocation method, Exponential convergence , Stability and error estimate.}

%%\pacs[JEL Classification]{D8, H51}

\pacs[MSC Classification]{65M70, 35A01, 35Q86, 35B65}

\maketitle

\section{Introduction}
Fractional/non-local differential equations have been effective in modeling anomalous diffusion as well as capturing long-range spatio-temporal interactions \cite{Hairer2003,Mao2019,QiangDu,QiangDu2}. Please refer to \cite{Meerschaert_2014,Can,HANYGA20011399} for more applications in poroelasticity, ground water hydrology, and geophysical flows. Analytic solutions of fractional differential equations  are typically obtained by using special functions for simple model problems. However, it is difficult to obtain closed forms of solutions for complex fractional differential equations. Hence, numerically solving fractional differential equations is turning out to be a popular topic. 

We consider in this work the following time fractional differential equations (TFDEs) 
\begin{equation}\label{fractional ODE}
\begin{aligned}
     \frac{d ^{\alpha}\phi}{d t^{\alpha}} = F(t, \phi(t)),\quad t>0,\quad 
      \phi(0) = \phi_{0},
\end{aligned}
\end{equation}
where $0<\alpha <1$ and the fractional operator is the so-called Caputo fractional derivative defined by 
\begin{equation*}\label{fractional derivative}
    \frac{d^{\alpha}\phi}{dt^{\alpha}} = {_{0}}I_{t}^{1-\alpha}\frac{d\phi(t)}{dt} = \frac{1}{\Gamma(1-\alpha)}\int_{0}^{t}\frac{1}{(t-\tau)^{\alpha}}\frac{d\phi}{d\tau}d\tau,
\end{equation*}
here $_{0}I_{t}^{\alpha}$ is fractional integral 
\begin{equation*}\label{eq:galpha}
    _{0}I_{t}^{\alpha} \phi(t) = g_{\alpha}(t)*\phi(t),\,\quad g_{\alpha}(t) = t^{\alpha -1}/\Gamma(\alpha),
\end{equation*}
 the operator $*$ denotes the convolution, $\Gamma(\cdot)$ is the Gamma function, $F(t, \phi(t))$ is continuous, bounded, and fulfills the Lipschitz condition with respect to the second variable such that TFDEs (\ref{fractional ODE}) is well-posed (see \cite{Diethelm2002}, Theorem 2.1 and 2.2).
 
 To numerically solve TFDEs (\ref{fractional ODE}), there are three main issues in designing efficient and high-accuracy schemes:
\begin{itemize}
    \item \emph{Singularity}. The singular kernel gives rise to singular solutions of TFDEs leading to difficulty in designing high-order schemes.
    \item \emph{Non-locality}. Another issue arising from the kernel is the non-locality leading to high computational complexity and storage.
    \item \emph{Long time simulation}. The third challenge lies in the difficulty of designing efficient algorithms for long-time simulations of non-local problems.
\end{itemize}

To address the above challenges, extensive efforts have been undertaken. 
Advanced high-order numerical schemes, including those based on the finite difference method and spectral method, have been devised specifically for handling smooth solutions \cite{Lin2007,Sun2006,Li2009,Gao2014,ALIKHANOV2015424}. However, the solutions to TFDEs typically exhibit lower regularity since the singular kernel of fractional operator, which  challenges in the design of high-accuracy numerical schemes. 
To resolve this issue, Jin et al.~\cite{Jin2017} employed the correction technique to ensure the optimal convergence order corresponding to the proposed numerical schemes for non-smooth solution. More literature based on correction techniques can be referenced in \cite{Jin2016,Jin2017,Yan2018,Shi2023}. 
Stynes et al.\cite{Martin2017,Kassem2020,KoptevaMeng} utilized non-uniform time grids (such as graded meshes and geometric meshes) to capture the weak singularity at $t=0$ under appropriate smoothness of the solution, thereby achieving optimal convergence orders for the numerical schemes under suitable parameter choices.  The aforementioned two methods either involve a high complexity in designing correction terms for TFDEs or find it difficult to design high-order numerical schemes for TFDEs with non-smooth solutions.
Spectral methods employing specially designed basis functions have been developed to address the singularity issue; however, these approaches are often restricted to simple model problems or face challenges with long-time simulations ~\cite{ChengShenWang, XuHou_Muntz, XuCui_Muntz}. 

On the other hand, enormous numerical schemes of TFDEs based on original L1 scheme \cite{Lin2007,Sun2006}, L2-1$_{\sigma}$ scheme \cite{ALIKHANOV2015424}, L2 scheme \cite{Gao2014,Zhu} invariably suffer from high computational complexity and storage costs. This
is due to the non-locality of the fractional derivatives. To overcome this issue, fractional order operators can be expressed  sum of exponential by using numerical approximations of the convolution kernel  function $g_{\alpha}(t)$  such that to design the efficient algorithm for TFDEs. For instance, Lubich \cite{Lubich2002} first expressed the convolution kernel $g_{\alpha}(t)$ in the form of its inverse Laplace transform $g_{\alpha}(t) = \frac{1}{2\pi i}\int_{\mathcal{H}}\mathcal{L}[g_{\alpha}]e^{t\lambda}d\lambda$, where $\mathcal{L}[g_{\alpha}](\lambda)$ denotes the Laplace transform of $g_{\alpha}(t)$ and $\mathcal{H}$ is a suitable contour. Subsequently, one can directly discretize the inverse Laplace transform by the appropriate numerical integration formulas to obtain efficient numerical schemes for fractional order operators. The storage and computational cost of the fast methods in \cite{Lubich2002} are $O(\log N)$ and $O(N\log N)$, which significantly less than  the direct methods with $O(N)$ memory and $O(N^{2})$ operations where $N$ is the total number of time steps. This approach can be applied to construct efficient methods for a wide class of non-local models. Some other relate works include \cite{Lubich2002, Ban2017, Lubich2006, Hesthaven2017jsc, Hesthaven2017,tensorFEM}. 

Another approach to obtaining the efficient algorithms is based on the following formula
\begin{equation}\label{kernel}
    g_{\alpha}(t) = \frac{sin(\alpha\pi)}{\pi}\int_{0}^{\infty}s^{-\alpha}e^{-ts}ds,\quad 0<\alpha< 1.
\end{equation}
Jiang et al.~\cite{Jiang2017} obtained a finite integral by truncating the above infinite integral, then combined Jacobi-Gauss quadrature and multi-domain Legendre-Gauss quadrature to discretize the finite integral, thereby they obtained efficient numerical algorithms for fractional order operators with  the storage
cost and computational complexity are $O(N_{\varepsilon})$ and $O(NN_{\varepsilon})$, respectively, where $N_{\varepsilon}$ is the number of sum-of-exponential truncated terms. Interested readers may refer to  \cite{Yan2017}. Zeng \cite{Zeng2018} approximated  kernel (\ref{kernel}) by truncated Laguerre-Gauss quadrature, and the discretization error caused from the Laguerre-Gauss quadrature is independent of the step size. Li \cite{Li2010} transformed kernel (\ref{kernel}) into its equivalent form, then multi-domain Legendre–Gauss quadrature was applied to approximate the transformed integral to obtain the efficient algorithm. Some other related works includes Guo \cite{Guo2019}, Chen \cite{Chen2022}, etc. The aforementioned methods all employ suitable quadrature techniques to discretize the convolution kernel $g_{\alpha}(t)$ directly, leading to the sum-of-exponentials form for accelerated computation of TFDEs. The interested readers can refer to \cite{Lubich2006,Li2010,Beylkin2010,Chen2022}.
However, such methods cannot simultaneously account for the singularity of the solution at the initial time, making it difficult to design high-order numerical schemes.

In summary, the aforementioned work cannot simultaneously solve the three issues. The aim of this work is to design efficient, high-order and stable schemes at a low cost of storage for TFDEs (\ref{fractional ODE}), capable of enabling long-time simulations. Specifically, inspired by the work in \cite{Fritz2021}, we derive an equivalent integer-order \emph{extended  parametric differential equations} (EPDE) for the TFDEs by dimensional expanding. Building on this framework, we design efficient and high-order numerical schemes for the EPDE. This approach provides a fresh perspective for developing high-order and efficient methods for TFDEs.
The main contributions of this work are given as follows:
\begin{itemize}
\item  We derive equivalent {integer-order parametric differential equations} for the TFDEs and give rigorous analysis on the stability as well as the regularity with respect to extended parametric $\theta$-direction.
\item We turn the original FDE into $M$ independent integer-order ODEs, where $M$ is the number of nodes used for the spectral collocation method. In particular, 
we use the Jacobi spectral collocation method for the $\theta$-direction and apply the characteristic decomposition generating $M$ independent integer-order ODEs. Then, we apply BDF-$k$ formulae for the time discretization. We show the stability and give a rigorous error estimate with order $O(\Delta t^{k} + M^{-m})$ for the proposed method, where $\Delta t$ is the time step size.
\item Both theoretical and numerical results indicate that we have spectral accuracy for the spectral approximation, which means that only a small value of $M$ is enough to guarantee a high accuracy for the $\theta$-direction. Therefore, we can fix a small value of $M$ for the spectral discretization so that the computational cost and storage requirement of our proposed algorithm are $O(N)$ and $O(1)$, respectively.
\item  We present the stable region and further demonstrate the effectiveness of the proposed method by linear and non-linear examples.
\end{itemize}

The rest of the paper is structured as follows. In Section \ref{sec:2}, we show the equivalence between TFDE and EPDE, and establish the stability for the EPDE. We present the numerical schemes and establish the stability as well as the error estimate in Section \ref{sec:3}. Several numerical examples are given in Section \ref{sec:4}, and a concluding remark is given in Section \ref{sec:5}. 

\section{Equivalent EPDE and its stability}\label{sec:2}

In this section, we begin by deriving an equivalent integer-order extended parametric differential equation for the TFDEs \eqref{fractional ODE}, and establish its stability.
%Then we would like to develop efficient and high-order schemes for the resulting integer problem in the following sections.

%===========================================================================
\subsection{A equivalent EPDE}
Now let us derive an equivalent integer differential equation for the fractional problem \eqref{fractional ODE}. 
To begin with, we apply the fractional integral on both sides of \eqref{fractional ODE} to get the solution of the fractional problem (see also \cite{Diethelm2002}, Lemma 2.1):
\begin{equation}\label{fractional integral}
    \phi(t) = \phi_{0} + \int_{0}^{t}\frac{(t-\tau)^{\alpha -1}}{\Gamma(\alpha)}F(\tau,\phi(\tau))d\tau = \phi_{0}+ \int_{0}^{t}g_{\alpha}(t-\tau)F(\tau,\phi(\tau))d\tau,
\end{equation}
where $g_{\alpha}(t) = t^{\alpha -1}/\Gamma(\alpha)$. By the definition of Gamma function (see \cite{Diethelm2002}, Definition 1.2), i.e., 
$\Gamma(1-\alpha)t^{\alpha -1} = \int_{0}^{\infty} m^{-\alpha}e^{-mt}dm,$ 
we deduce 
\begin{equation*}
\label{relationship}
    \begin{aligned}
    g_{\alpha}(t)& = \frac{t^{\alpha -1}}{\Gamma(\alpha)} 
    = \frac{\Gamma(1-\alpha)t^{\alpha -1}}{\Gamma(1-\alpha)\Gamma(\alpha)}
    %= \frac{t^{\alpha-1}}{\Gamma(1-\alpha)\Gamma(\alpha)}\int_{0}^{\infty}s^{-\alpha}e^{-s}ds\\
    = \frac{1}{\Gamma(1-\alpha)\Gamma(\alpha)}\int_{0}^{\infty}m^{-\alpha}e^{-mt}dm\\
        &=\frac{1}{\Gamma(1-\alpha)\Gamma(\alpha)}\int_{0}^{1}c_{1}(\theta)^{-\alpha}e^{-c_{1}(\theta)t}c_{0}(\theta)^{2}d\theta
        =\int_{0}^{1}c_{0}(\theta)e^{-c_{1}(\theta)t}\omega_{\alpha}(\theta)d\theta,
    \end{aligned}
\end{equation*}
where 
\begin{equation}\label{definition of omega}
    c_{0}(\theta) = \frac{1}{1-\theta},\quad 
    c_{1}(\theta) = \frac{\theta}{1-\theta},\quad  \omega_{\alpha}(\theta) = \frac{\theta^{-\alpha}(1-\theta)^{\alpha-1}}{\Gamma(1-\alpha)\Gamma(\alpha)},
\end{equation}
and in the derivation, we use the change of variable $m = c_{1}(\theta)$ and the equalities $\Gamma(1-\alpha)\Gamma(\alpha)\omega_{\alpha}(\theta) = c_{0}(\theta)c_{1}(\theta)^{-\alpha}$ and $c_{1}(\theta)' = (c_{0}(\theta))^2$.
Substituting the above two equations into \eqref{fractional integral} gives 
\begin{equation*}\label{r2}
\begin{aligned}
    \phi(t) 
   % &= \phi_{0} +   \int_{0}^{t}g_{\alpha}(t-\tau)F(\tau,\phi(\tau))d\tau \\
   & = \phi_{0} +\int_{0}^{t}\int_{0}^{1}c_{0}(\theta)e^{-c_{1}(\theta)(t-\tau)}\omega_{\alpha}(\theta)d\theta F(\tau,\phi(\tau))d\tau\\
    &=\int_{0}^{1}\bigg[\phi_{0}  + c_{0}(\theta)\int_{0}^{t}e^{-c_{1}(\theta)(t-\tau)}F(\tau,\phi(\tau))d\tau\bigg]\omega_{\alpha}(\theta)d\theta.
\end{aligned}
\end{equation*}
Here we use the equality $\int_{0}^{1}\omega_{\alpha}(\theta)d\theta = 1$.

Let us define the function 
\begin{equation*}
    %\mathcal{C}[\varphi](t) = \phi(t),\quad
    \varphi(t,\theta) := \phi_{0} + c_{0}(\theta)\int_{0}^{t}e^{-c_{1}(\theta)(t-\tau)}F(\tau,\phi(\tau))d\tau.
\end{equation*}
Consequently, we have from the above two equations that the solution of the TFDEs \eqref{fractional ODE} is given by 
\begin{equation}\label{eq:transform}
    \phi(t) = \int_{0}^{1} \varphi(t,\theta)\omega_{\alpha}(\theta)d\theta := \mathcal{C}[\varphi](t).
\end{equation}
Observe that the function $\varphi(t,\theta)$ is the solution of the following parametric integer-order ordinary differential equation:
\begin{equation}\label{equation}
\begin{aligned}
    &\frac{\partial \varphi(t,\theta)}{\partial t} + c_{1}(\theta)\varphi(t,\theta) = c_{0}(\theta)F(t,\mathcal{C}[\varphi](t)) + c_{1}(\theta)\phi_{0},\quad\theta\in (0,1),t>0,\\
    &\varphi(0,\theta) = \phi_{0},\quad\theta\in (0,1).
\end{aligned}
\end{equation}
We refer to the above equation as the  \emph{Extended Parametric Differential Equation} (EPDE), which is an \emph{integer-order} parametric equation. Once we solve the above integer equation, the solution of the corresponding TFDEs can be computed immediately using \eqref{eq:transform}.
Assume that $F(\cdot,\cdot)$ satisfies the Lipschitz condition with respect to the second variable, then the above parametric equation is well-posed with the help of Picard theorem (see \cite[Page 62]{odinarydifferentialequation}).

\subsection{Stability}
Before designing the numerical schemes, we establish the stability of EPDE \eqref{equation}.
In particular, 
define 
\begin{equation}\label{eq:alp01}
    \omega_{\alpha,0}(\theta) = (1-\theta)\omega_{\alpha}(\theta),\quad 
    \omega_{\alpha,1}(\theta) = \theta\omega_{\alpha}(\theta),
\end{equation}
where $\omega_{\alpha}$ is given in \eqref{definition of omega},
we have the following Theorem.
\begin{theorem}\label{thm:stab:cont}
 For $0<t\leq T$, $\Omega = (0,1)$, if $F(t,\phi)\phi \leq L|\phi|$, where $L$ is a positive constant, then there exists a constant $\epsilon>0$ such that the following estimate holds 
\begin{equation}\label{eq:sta:cont}
\begin{aligned}
    \|\varphi\|^2_{L^{\infty}\left(0,T;L^2_{\omega_{\alpha,0}}(\Omega)\right)} 
    + \|\varphi\|^2_{L^{2}\left(0,T;L^2_{\omega_{\alpha}}(\Omega)\right)}
\leq C(\epsilon,L, \alpha, \phi_0, T),
\end{aligned}
\end{equation}
where 
$$C(\epsilon,L, \alpha, \phi_0, T) =2\left(\phi_0^{2} \alpha e^{2\epsilon T} +
    \left(\frac{L^2}{4\epsilon}\alpha + (L^2 +\phi_0^{2})(1-\alpha)\right) e^{2\epsilon T}T\right).$$
\end{theorem}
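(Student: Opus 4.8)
The plan is to run a weighted energy estimate directly on the EPDE \eqref{equation}, viewed as a family of integer-order ODEs in $t$ parametrized by $\theta$. First I would multiply \eqref{equation} by $\varphi(t,\theta)\,\omega_{\alpha,0}(\theta)$ and integrate over $\theta\in\Omega=(0,1)$. The weight $\omega_{\alpha,0}=(1-\theta)\omega_\alpha$ is essentially forced on us by the singularities of $c_0,c_1$ at $\theta=1$: it is (up to constants) the heaviest weight for which $c_1\varphi^2\omega$ stays integrable near $\theta=1$, and it obeys the convenient algebraic identities
\[
c_0(\theta)\,\omega_{\alpha,0}(\theta)=\omega_\alpha(\theta),\qquad c_1(\theta)\,\omega_{\alpha,0}(\theta)=\omega_{\alpha,1}(\theta),\qquad \omega_{\alpha,0}+\omega_{\alpha,1}=\omega_\alpha,
\]
together with $\int_\Omega\omega_{\alpha,0}\,d\theta=\alpha$, $\int_\Omega\omega_{\alpha,1}\,d\theta=1-\alpha$, $\int_\Omega\omega_\alpha\,d\theta=1$. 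Using these, and the facts that $F(t,\mathcal{C}[\varphi](t))$ is independent of $\theta$ while $\int_\Omega\varphi\,\omega_\alpha\,d\theta=\mathcal{C}[\varphi](t)=\phi(t)$, the multiplication collapses to the energy identity
\[
\tfrac12\tfrac{d}{dt}\|\varphi(t)\|^2_{L^2_{\omega_{\alpha,0}}(\Omega)}+\|\varphi(t)\|^2_{L^2_{\omega_{\alpha,1}}(\Omega)}=F\!\left(t,\phi(t)\right)\phi(t)+\phi_0\int_\Omega\varphi(t,\theta)\,\omega_{\alpha,1}(\theta)\,d\theta .
\]

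Next I would bound the right-hand side. For the first term, the hypothesis gives $F(t,\phi(t))\phi(t)\le L\,|\phi(t)|$; writing $\phi(t)=\int_\Omega\varphi(\omega_{\alpha,0}+\omega_{\alpha,1})\,d\theta$ and applying the Cauchy--Schwarz inequality to each piece yields $|\phi(t)|\le\sqrt{\alpha}\,\|\varphi\|_{L^2_{\omega_{\alpha,0}}}+\sqrt{1-\alpha}\,\|\varphi\|_{L^2_{\omega_{\alpha,1}}}$ (this is the origin of the $\alpha$ and $1-\alpha$ factors in $C$); the second term is handled by Cauchy--Schwarz as $\phi_0\int_\Omega\varphi\,\omega_{\alpha,1}\,d\theta\le|\phi_0|\sqrt{1-\alpha}\,\|\varphi\|_{L^2_{\omega_{\alpha,1}}}$. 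Now apply Young's inequality: on the $\|\varphi\|_{L^2_{\omega_{\alpha,0}}}$ contribution use the free parameter $\epsilon$ (this term cannot be absorbed, there being no $\omega_{\alpha,0}$-weighted dissipation on the left), and on the $\|\varphi\|_{L^2_{\omega_{\alpha,1}}}$ contributions use a parameter small enough that their sum stays $\le\tfrac12\|\varphi\|^2_{L^2_{\omega_{\alpha,1}}}$, which is absorbed by the dissipation term. This produces a Grönwall-type differential inequality
\[
\tfrac{d}{dt}\|\varphi(t)\|^2_{L^2_{\omega_{\alpha,0}}(\Omega)}+\|\varphi(t)\|^2_{L^2_{\omega_{\alpha,1}}(\Omega)}\le 2\epsilon\,\|\varphi(t)\|^2_{L^2_{\omega_{\alpha,0}}(\Omega)}+\tfrac{L^2\alpha}{2\epsilon}+2(L^2+\phi_0^2)(1-\alpha),
\]
with $\|\varphi(0)\|^2_{L^2_{\omega_{\alpha,0}}(\Omega)}=\phi_0^2\alpha$.

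To finish, I would multiply by the integrating factor $e^{-2\epsilon t}$ and integrate over $[0,t]$, $t\le T$. Dropping the nonnegative dissipation term and applying Grönwall's lemma gives the $L^\infty(0,T;L^2_{\omega_{\alpha,0}})$ bound of the form $e^{2\epsilon T}(\phi_0^2\alpha+GT)$, $G=\tfrac{L^2\alpha}{2\epsilon}+2(L^2+\phi_0^2)(1-\alpha)$; retaining instead the dissipation term controls $\|\varphi\|^2_{L^2(0,T;L^2_{\omega_{\alpha,1}})}$ by the same quantity. Then, using the decomposition $\omega_\alpha=\omega_{\alpha,0}+\omega_{\alpha,1}$ together with these two bounds (the $\omega_{\alpha,0}$-part of $\int_0^T\|\varphi\|^2_{L^2_{\omega_\alpha}}\,dt$ being controlled via the $L^\infty$ estimate), one obtains the $L^2(0,T;L^2_{\omega_\alpha})$ bound, and summing the two pieces gives \eqref{eq:sta:cont} with a constant of the stated form $C(\epsilon,L,\alpha,\phi_0,T)$ after collecting the numerical constants; any $\epsilon>0$ is admissible, smaller $\epsilon$ trading the factor $e^{2\epsilon T}$ against $L^2/\epsilon$. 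The main obstacle — and the reason the statement carries the two different weights $\omega_{\alpha,0}$ and $\omega_\alpha$ — is precisely this weight mismatch: the only multiplier that keeps every term finite yields dissipation in the weaker $\omega_{\alpha,1}$-norm, so the $\omega_{\alpha,0}$-part of the energy must be propagated through Grönwall's inequality and the full $\omega_\alpha$-norm reassembled only at the very end; apart from that, the single delicate point is tracking the Cauchy--Schwarz/Young constants so they land on the exact form of $C$.
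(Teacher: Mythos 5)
Your proposal is correct and follows essentially the same route as the paper: the same multiplier $\varphi\,\omega_{\alpha,0}$, the same weight identities $c_0\omega_{\alpha,0}=\omega_\alpha$, $c_1\omega_{\alpha,0}=\omega_{\alpha,1}$, the same $\epsilon$-Young treatment of the $\omega_{\alpha,0}$-contribution with absorption of the $\omega_{\alpha,1}$-terms, Gr\"onwall, and reassembly via $\omega_\alpha=\omega_{\alpha,0}+\omega_{\alpha,1}$. The only cosmetic difference is that you apply Cauchy--Schwarz at the norm level where the paper uses pointwise Young's inequality under the integral, which yields the same constants.
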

\begin{proof}
    Taking the $L^{2}$ inner product for the first equation in (\ref{equation}) with  $\omega_{\alpha,0}(\theta)\varphi(t,\theta)$, we get
\begin{equation*}
    \int_{0}^{1}\varphi\frac{\partial \varphi}{\partial t}\omega_{\alpha,0} d\theta + \int_{0}^{1}\varphi^{2}\omega_{\alpha,1}d\theta = F(t,\mathcal{C}[\varphi])\int_{0}^{1}\varphi\omega_{\alpha}d\theta + \int_{0}^{1}\phi_{0}\varphi\omega_{\alpha,1}d\theta,
\end{equation*}
where $\omega_{\alpha,1}(\theta) = \theta\omega_{\alpha}(\theta)$ given in \eqref{eq:alp01}. Note that $\phi = \mathcal{C}[\varphi]=\int_{0}^{1}\varphi(t,\theta)\omega_{\alpha}(\theta)d\theta$. 
Thus, we have 
\begin{equation}\label{stability:01}
\begin{aligned}
    \frac{1}{2}\frac{d}{dt}\int_{0}^{1}\varphi^{2}\omega_{\alpha,0}d\theta + \int_{0}^{1}\varphi^{2}\omega_{\alpha,1}d\theta 
    &= F(t,\phi)\phi + \int_{0}^{1}\phi_{0}\varphi\omega_{\alpha,1}d\theta \\
    & \leq L|\phi| + \int_{0}^{1}\phi_{0}^{2}\omega_{\alpha,1}d\theta + \frac{1}{4}\int_{0}^{1}\varphi^{2}\omega_{\alpha,1}d\theta.
\end{aligned}
\end{equation}
We then bound $L|\phi|$ by using Cauchy-Schwarz and Young inequalities as follows:
\begin{equation}\label{stability:02}
\begin{aligned}
    L|\phi| 
    &= L\left|\int_{0}^{1}\varphi\omega_{\alpha}d\theta \right|
    =  L\left|\int_{0}^{1}\varphi(\omega_{\alpha,0} + \omega_{\alpha,1})d\theta \right|
    \leq L\int_{0}^{1}|\varphi|(\omega_{\alpha,0} + \omega_{\alpha,1})d\theta
   \\
   &\leq 
   \frac{L^2}{4\epsilon}\int_{0}^{1}\omega_{\alpha,0}d\theta + \epsilon\int_{0}^{1}\varphi^{2}\omega_{\alpha,0}d\theta
   + L^2\int_{0}^{1}\omega_{\alpha,1}d\theta + \frac{1}{4}\int_{0}^{1}\varphi^{2}\omega_{\alpha,1}d\theta,
\end{aligned}
\end{equation}
where $\epsilon>0$ is a constant. 
We then deduce from \eqref{stability:01} and \eqref{stability:02} that 
\begin{equation*}
%\begin{aligned}
    \frac{d}{dt}\int_{0}^{1}\varphi^{2}\omega_{\alpha,0}d\theta + \int_{0}^{1}\varphi^{2}\omega_{\alpha,1}d\theta 
    \leq 2\epsilon \int_{0}^{1}\varphi^{2}\omega_{\alpha,0}d\theta
    + 2\left(\frac{L^2}{4\epsilon}\alpha + (L^2 +\phi_0^{2})(1-\alpha) \right),
%\end{aligned}
\end{equation*}
where we use the equalities 
\begin{equation}\label{eq:alp}
    \int_{0}^{1}\omega_{\alpha,0}(\theta)d\theta = 
\alpha,\; \text{and } \int_{0}^{1}\omega_{\alpha,1}(\theta)d\theta = 1-\alpha.
\end{equation}
% It follows that 
% \begin{equation*}
% %\begin{aligned}
%     \frac{d}{dt}\left\{e^{-2\epsilon t}\int_{0}^{1}\varphi^{2}\omega_{\alpha,0}d\theta \right\} + e^{-2\epsilon t}\int_{0}^{1}\varphi^{2}\omega_{\alpha,1}d\theta 
%     \leq 
%      2\left(\frac{L^2}{4\epsilon}\alpha + (L^2 +\phi_0)(1-\alpha)\right) e^{-2\epsilon t}.
% %\end{aligned}
% \end{equation*}
% which in turn gives
%
We then have by the Gronwall Lemma that 
\begin{equation*}
%\begin{aligned}
    \int_{0}^{1}\varphi^{2}\omega_{\alpha,0}d\theta + \int_{0}^{t}\int_{0}^{1}\varphi^{2}\omega_{\alpha,1}d\theta dt
    \leq \phi_0^{2} \alpha e^{2\epsilon t} +
    \left(\frac{L^2}{4\epsilon}\alpha + (L^2 +\phi_0^{2})(1-\alpha)\right) e^{2\epsilon t}t.
%\end{aligned}
\end{equation*}
Thus, the estimate \eqref{eq:sta:cont} follows from $\omega_{\alpha} = \omega_{\alpha,0} + \omega_{\alpha, 1}$ and 
\begin{equation*}
    \|\varphi\|_{L^{2}\left(0,T;L^2_{\omega_{\alpha,0}}(\Omega)\right)}
    \leq
    \|\varphi\|_{L^{\infty}\left(0,T;L^2_{\omega_{\alpha,0}}(\Omega)\right)}.
\end{equation*}
\end{proof}

\section{Numerical discretization and error analysis}\label{sec:3}
We now present the numerical schemes for problem \eqref{equation} in Subsection \ref{sec:discret} and provide the corresponding error estimate in Subsection \ref{sec:errorest}.

%%%===========================================================================
\subsection{Numerical discretization}\label{sec:discret}
Let us first give the numerical schemes. Specifically, we use the BDF-$k$ scheme for the time discretization while we use the Jacobi spectral collocation method for the discretization of the extended $\theta$ direction. 

We point out here that we only show the present schemes for the linear case, i.e., 
$$F(t,\mathcal{C}[\varphi](t)) = -\lambda\mathcal{C}[\varphi](t) + f(t).$$ 
 For the nonlinear case, we shall use the Picard iteration method in the implementation.
%, and further assume $\lambda \le 0$. 
Furthermore, for simplicity, we set $f(t) = 0$ in the following analysis.

%%%%%%%%%%%%%%%%%%%%%%%%%%%%%%%%%%%%%%%%
\subsubsection{Temporal-discretization and its stability}
For a given integer $N$, let $\Delta t = \frac{T}{N}$ be the time step size, $t_{n} = n\Delta t, n = 0,1,\ldots,N$ be the grid points of $I = [0,T]$. 
Therefore, by multiplying $1-\theta$ on both sides 
of EPDE \eqref{equation}, we get the BDF-$k$ scheme for \eqref{equation} as follows:
\begin{equation}\label{semi-discrete}
    \frac{(1 - \theta)}{\Delta t}\mathcal{L}\varphi^{n+1}(\theta) + \theta \varphi^{n+1}(\theta) = -\lambda\mathcal{C}[\varphi^{n+1}] + \theta\phi_{0},
\end{equation}
where 
\begin{equation}\label{the definite of L}
    \mathcal{L}\varphi^{n+1} = \alpha_{k}\varphi^{n+1} - \sum\limits_{j = 0}^{k-1}b_{j}^{(k)}\varphi^{n-j},
\end{equation}
$\alpha_{k}$ and $b_{j}^{(k)},\; (k = 1,\ldots,5)$ are given in  Table \ref{BDFk}, and $\varphi^n(\theta)$ is the approximation of $\varphi(t, \theta)$ at time $t_n$.
\begin{table}[]
    \centering
  %  \resizebox{.6\columnwidth}{!}{
   \begin{tabular}{cccccccc}
   \toprule
   $k$ & $\alpha_{k}$ & $b_{0}^{(k)}$ & $b_{1}^{(k)}$ & $b_{2}^{(k)}$ & $b_{3}^{(k)}$ &$b_{4}^{(k)}$\\
   \midrule
   1 & 1 & 1 &  &  & & \\
   2 & $\frac{3}{2}$ & 2 & -$\frac{1}{2}$ & & &  \\[0.8ex]
   3 & $\frac{11}{6}$ & $3$ & -$\frac{3}{2}$ & $\frac{1}{3}$ & & \\[0.8ex]
   4 & $\frac{25}{12}$ & $4$ & -$3$ & $\frac{4}{3}$& -$\frac{1}{4}$ &\\[0.8ex]
   5 & $\frac{137}{60}$ & 5 & -5 & $\frac{10}{3}$ & -$\frac{5}{4}$ &$\frac{1}{5}$\\[0.8ex]
   \bottomrule
\end{tabular}
%}
    \caption{The coefficients $\alpha_{k}$ and $b_{j}^{(k)}$ for the BDF-$k$ scheme \eqref{semi-discrete}.}
    \label{BDFk}
\end{table}
Next, we would like to show the stability of scheme \eqref{semi-discrete}.
To this end, we begin by presenting the following result (see \cite[Page 1371]{Lubich_Multiplier}), which plays an important role in proving the stability and the error analysis for scheme \eqref{semi-discrete}.    
\begin{lemma}\label{lemma1}
    For $1\leq k\leq 5$, there exist $0\leq \tau_{k} <1,\; \mathcal{L}$ is defined in \eqref{the definite of L},  a positive definite symmetric matrix $G = (g_{ij}) \in \mathcal{R}^{k,k}$ and real numbers $\delta_{0},\ldots,\delta_{k}$ such that
    \begin{equation}\label{eq:lemma1}
    \begin{aligned}
         \left(\mathcal{L}\varphi^{n+1}, \varphi^{n+1} - \tau_{k}\varphi^{n}\right) 
         = \sum\limits_{i,j = 1}^{k}g_{ij}(\varphi^{n+1+i-k},\varphi^{n+1+j-k}) &\\
       - \sum\limits_{i,j = 1}^{k}g_{ij}(\varphi^{n+i-k},\varphi^{n+j-k}) + \Vert\sum\limits_{i = 0}^{k}\delta_{i}\varphi^{n+1+i-k}\Vert^{2}, &
    \end{aligned}
    \end{equation}
    where the smallest possible values of $\tau_{k}$ are
    \begin{equation*}
        \tau_{1} = \tau_{2} = 0,\quad\tau_{3} = 0.0836,\quad \tau_{4} = 0.2878,\quad
        \tau_{5} = 0.8160,
    \end{equation*}
and  $\alpha_{k}$, $b_{j}^{(k)}$ are listed in Table \ref{BDFk}.
\end{lemma}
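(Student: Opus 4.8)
The statement in Lemma~\ref{lemma1} is a known result from Lubich, Nevanlinna and Savaré (or in this formulation, the reference \cite{Lubich_Multiplier}), so the natural ``proof'' is really an explanation of how such a multiplier identity is obtained and verified. The plan is to argue via the $G$-stability / $A(\alpha)$-stability theory for multistep methods, recast in the language of generating functions and positive semidefinite quadratic forms.

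First I would introduce the generating polynomials associated with the BDF-$k$ operator $\mathcal L$, namely $\rho(\zeta) = \alpha_k \zeta^k - \sum_{j=0}^{k-1} b_j^{(k)} \zeta^{k-1-j}$ (the usual first characteristic polynomial up to normalization), and encode the multiplier $\varphi^{n+1} - \tau_k \varphi^n$ by the polynomial $\mu(\zeta) = \zeta - \tau_k$. The bilinear expression $(\mathcal L\varphi^{n+1}, \varphi^{n+1}-\tau_k\varphi^n)$, summed telescopically, is controlled by the rational function $\rho(\zeta)\overline{\mu(\zeta)}$ restricted to $|\zeta| = 1$; the content of the lemma is that for the right choice of $\tau_k$ this product has nonnegative real part on the unit circle, equivalently the associated $(k\times k)$ Hermitian (here real symmetric) Toeplitz-type form $G$ is positive definite and the ``defect'' $\mathrm{Re}\,\rho(\zeta)\overline{\mu(\zeta)} - |\sigma(\zeta)|^2 \ge 0$ can be written as a single square $\|\sum_i \delta_i \varphi^{n+1+i-k}\|^2$. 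Concretely, I would: (i) write $\rho(\zeta)\mu(\zeta) = (\zeta-1)\,\kappa(\zeta) + (\text{square term})$ and match coefficients to read off candidates for $g_{ij}$ and $\delta_i$; (ii) verify by direct expansion that the telescoping identity \eqref{eq:lemma1} holds as a polynomial identity in the shift operator, which is a finite linear-algebra computation once $k$ is fixed; (iii) check that the resulting $G=(g_{ij})$ is positive definite — for $k\le 2$ this is classical ($\tau_k=0$, $A$-stability of BDF1, BDF2), and for $k=3,4,5$ it holds precisely because $\tau_k$ is chosen as the smallest value making the circle condition hold, which one confirms numerically / via a resultant computation, yielding the listed values $\tau_3=0.0836,\ \tau_4=0.2878,\ \tau_5=0.8160$.

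The key structural step is the passage from the scalar inequality $\mathrm{Re}\big(\rho(e^{i\vartheta})\overline{(e^{i\vartheta}-\tau_k)}\big)\ge 0$ on $\vartheta\in[0,2\pi)$ to the operator-level telescoping identity: this is exactly the Bochner/Fejér–Riesz-type argument that a nonnegative trigonometric polynomial is a sum of squared moduli, which here produces the single term $\|\sum_{i=0}^k\delta_i\varphi^{n+1+i-k}\|^2$, while the non-sign-definite part reorganizes into the difference of the two $G$-forms. I would present this for general inner-product spaces (so it applies to our $L^2_{\omega}$ setting with $\varphi^n=\varphi^n(\theta)$) since the argument only uses bilinearity and symmetry of the inner product, not any special structure of the space.

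The main obstacle — and the only genuinely non-elementary part — is establishing positive definiteness of $G$ together with the \emph{sharpness} of the stated $\tau_k$ for $k=3,4,5$; unlike $G$-stability of BDF1/BDF2 this is not available in closed form and historically was obtained by Nevanlinna–Odeh / Lubich via a careful analysis of the boundary locus and a numerical optimization, so in the write-up I would simply invoke \cite{Lubich_Multiplier} (and \cite[Page~1371]{Lubich_Multiplier} as cited) for these constants rather than re-deriving them, and limit the self-contained verification to confirming that, given these $\tau_k$, $G$, and $\delta_i$, the identity \eqref{eq:lemma1} holds by direct expansion — which is what is actually needed downstream for the stability and error analysis of scheme \eqref{semi-discrete}.
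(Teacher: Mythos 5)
The paper offers no proof of this lemma at all: it simply quotes the result from \cite[Page 1371]{Lubich_Multiplier}, whose derivation is exactly the route you sketch --- the Nevanlinna--Odeh multiplier condition $\mathrm{Re}\,\delta(\zeta)/(1-\tau_k\zeta)>0$ on the unit disc for the BDF-$k$ generating polynomial, converted into the telescoping quadratic-form identity \eqref{eq:lemma1} via Dahlquist's $G$-stability equivalence theorem, with the single square term coming from a Fej\'er--Riesz factorization and the sharp $\tau_k$ taken from the cited numerical work. Your proposal is therefore correct and essentially the same as the paper's treatment; the only cosmetic inaccuracy is calling $G$ a ``Toeplitz-type'' form, since Dahlquist's construction yields a symmetric positive definite matrix with no Toeplitz structure in general.
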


Next, we are in the position of providing the stability of the scheme \eqref{semi-discrete}. In particular, we have the following estimate.
\begin{theorem}\label{stability}
Let $\tau_k$ be given as in Lemma \ref{lemma1}. The semi-discrete BDF-$k$ ($1\le k \le 5$) scheme \eqref{semi-discrete} is stable  with $\lambda \geq  0$ in the sense that, 
  \begin{equation}\label{eq:semidis-sta}
            \begin{aligned}    &\lambda_{min}\Vert\varphi^{N+1}\Vert_{L_{\omega_{\alpha,0}}^{2}}^{2} + \frac{\Delta t}{2}(1-\tau_{k}^{2})\sum\limits_{n = k-1}^{N}\left(\Vert\varphi^{n+1}\Vert_{L_{\omega_{\alpha,1}}^{2}}^{2}  + \lambda\vert\phi^{n+1}\vert^{2}\right)\\
            &\leq\frac{1}{1 - \tau_{k}^{2}}\bigg[ \lambda_{max}\alpha 
 +\frac{T}{2}(1-\alpha) \bigg]\vert\phi_{0}\vert^{2},
            \end{aligned}
        \end{equation}
    where $\phi_{0}$ is the initial value,   $\tau_{k}$ are list in Lemma \ref{lemma1}, $\lambda_{min}$ and  $\lambda_{max}$ are the minimum eigenvalue and the maximum eigenvalue of the positive definite symmetric matrix $G = (g_{ij})$, respectively, $\phi^{n} = \mathcal{C}[\varphi^{n}]$ is the solution of the original fractional problem at time $t_{n}$.
    \end{theorem}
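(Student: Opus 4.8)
The plan is to mimic the continuous stability argument of Theorem~\ref{thm:stab:cont} at the discrete level, using the $G$-stability identity of Lemma~\ref{lemma1} to handle the multistep difference operator $\mathcal{L}$. First I would test the semi-discrete equation \eqref{semi-discrete} against the quantity $\varphi^{n+1}(\theta) - \tau_k\varphi^{n}(\theta)$ in the $L^2_{\omega_\alpha}$ inner product. Multiplying \eqref{equation} by $(1-\theta)$ before discretizing means the three terms become: $\frac{1}{\Delta t}\bigl(\mathcal{L}\varphi^{n+1},(\varphi^{n+1}-\tau_k\varphi^{n})\bigr)_{\omega_{\alpha,0}}$ from the first term, since $(1-\theta)\omega_\alpha = \omega_{\alpha,0}$; the term $\bigl(\theta\varphi^{n+1},\varphi^{n+1}-\tau_k\varphi^n\bigr)_{\omega_\alpha}=\bigl(\varphi^{n+1},\varphi^{n+1}-\tau_k\varphi^n\bigr)_{\omega_{\alpha,1}}$ from the second; and on the right-hand side $-\lambda\mathcal{C}[\varphi^{n+1}]\int_0^1(1-\theta)\omega_\alpha(\varphi^{n+1}-\tau_k\varphi^n)\,d\theta + \theta\phi_0$-type terms. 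Note that $\int_0^1\omega_{\alpha,0}(\varphi^{n+1}-\tau_k\varphi^n)\,d\theta$ is \emph{not} simply $\phi^{n+1}-\tau_k\phi^n$, so I would instead split $(1-\theta)\omega_\alpha = \omega_\alpha - \omega_{\alpha,1}$ at the right moment, or more cleanly: observe that testing \eqref{semi-discrete} this way and integrating against $\omega_\alpha$ recovers the coupling $\mathcal{C}[\varphi^{n+1}]=\phi^{n+1}$ through the $\int_0^1 c_0 c_1^{-\alpha}$-type weight identities used earlier.

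Second, I would apply Lemma~\ref{lemma1} with the $\omega_{\alpha,0}$-weighted inner product (the lemma is stated for a generic inner product, so it applies verbatim with $(\cdot,\cdot)=(\cdot,\cdot)_{\omega_{\alpha,0}}$) to rewrite $\frac{1}{\Delta t}(\mathcal{L}\varphi^{n+1},\varphi^{n+1}-\tau_k\varphi^n)_{\omega_{\alpha,0}}$ as a telescoping difference of the $G$-norm $\sum_{i,j}g_{ij}(\varphi^{n+1+i-k},\varphi^{n+1+j-k})_{\omega_{\alpha,0}}$ plus the nonnegative square term $\|\sum_i\delta_i\varphi^{n+1+i-k}\|^2_{\omega_{\alpha,0}}$, which I discard. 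For the $\omega_{\alpha,1}$-term I use $(\varphi^{n+1},\varphi^{n+1}-\tau_k\varphi^n)_{\omega_{\alpha,1}}\ge \frac{1-\tau_k^2}{2}\|\varphi^{n+1}\|^2_{\omega_{\alpha,1}} - \frac{1-\tau_k^2}{2}\cdot(\text{something absorbable})$; more precisely the standard bound $\langle a, a-\tau b\rangle \ge \tfrac12(1-\tau^2)\|a\|^2 + \tfrac12\|a-b\|^2 - \tfrac{\tau}{2}\|b\|^2$ style manipulation, or simply $2\langle a,a-\tau b\rangle \ge (1-\tau^2)\|a\|^2 + (1-\tau)(\|a\|^2+\cdots)$; I would choose the cleanest form that produces exactly the coefficient $\tfrac12(1-\tau_k^2)$ appearing in \eqref{eq:semidis-sta}. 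The dissipative term $-\lambda\phi^{n+1}(\phi^{n+1}-\tau_k\phi^n)$ similarly contributes $\tfrac12(1-\tau_k^2)\lambda|\phi^{n+1}|^2$ after the same algebraic inequality, while the source term $\theta\phi_0$ tested against $\varphi^{n+1}-\tau_k\varphi^n$ is bounded by Cauchy--Schwarz and Young's inequality using $\int_0^1\omega_{\alpha,1}\,d\theta = 1-\alpha$, producing the $\tfrac{T}{2}(1-\alpha)|\phi_0|^2$ contribution after summation over the $N$ time levels (each step contributing $\Delta t$, and $N\Delta t = T$).

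Third, I would sum the resulting inequality over $n = k-1,\ldots,N$. The $G$-norm terms telescope, leaving $+\sum_{i,j}g_{ij}(\varphi^{N+1+i-k},\varphi^{N+1+j-k})_{\omega_{\alpha,0}}$ at the top minus the corresponding quantity at level $k-1$; bounding the former below by $\lambda_{\min}\|\varphi^{N+1}\|^2_{\omega_{\alpha,0}}$ (actually $\ge\lambda_{\min}\sum_{j}\|\varphi^{j}\|^2$, and I keep only the $\varphi^{N+1}$ piece) and the latter above by $\lambda_{\max}$ times the sum of $\|\varphi^j\|^2_{\omega_{\alpha,0}}$ at the starting levels, which equals $\lambda_{\max}\cdot k\cdot|\phi_0|^2\cdot(\text{weight mass})$ — here I use that the starting values $\varphi^0=\cdots=\varphi^{k-1}=\phi_0$ are exact (or supplied by a starting procedure) and $\int_0^1\omega_{\alpha,0}\,d\theta=\alpha$, giving the $\lambda_{\max}\alpha|\phi_0|^2$ term. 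Finally, dividing through by $1-\tau_k^2$ (legitimate since $\tau_k<1$ for $1\le k\le5$ by Lemma~\ref{lemma1}) yields \eqref{eq:semidis-sta}.

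\textbf{Main obstacle.} I expect the delicate point to be the bookkeeping of the weights: the operator is multiplied by $(1-\theta)$, so the natural energy is the $\omega_{\alpha,0}$-norm, but the zeroth-order term carries $\omega_{\alpha,1}$ and the nonlocal coupling $\mathcal{C}[\varphi^{n+1}]=\int_0^1\varphi^{n+1}\omega_\alpha\,d\theta$ mixes both weights; making the $-\lambda\mathcal{C}[\varphi^{n+1}]\cdot\int_0^1(\varphi^{n+1}-\tau_k\varphi^n)(1-\theta)\omega_\alpha\,d\theta$ cross-term collapse exactly to $-\lambda\phi^{n+1}(\phi^{n+1}-\tau_k\phi^n)$ requires either an extra test against the complementary weight $\omega_{\alpha,1}$ and adding the two energy identities, or recognizing that in \eqref{semi-discrete} the factor $(1-\theta)$ was applied to \emph{all} terms including the right-hand side, so that testing against $\varphi^{n+1}-\tau_k\varphi^n$ and then integrating in $\theta$ against the bare weight reproduces $\mathcal{C}$. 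Pinning down this algebra so that the constants $\lambda_{\min},\lambda_{\max},\alpha,1-\alpha$ line up precisely as in \eqref{eq:semidis-sta}, and confirming the factor $\tfrac12(1-\tau_k^2)$ rather than $(1-\tau_k)$, is where the care is needed; the rest is a routine $G$-stability-plus-Gronwall (here even without an exponential, since $\lambda\ge0$ makes the zeroth-order terms purely dissipative) argument.
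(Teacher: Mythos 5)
Your route is the paper's: multiply \eqref{semi-discrete} by $(\varphi^{n+1}-\tau_{k}\varphi^{n})\,\omega_{\alpha}\,\Delta t$, integrate in $\theta$, apply Lemma \ref{lemma1} in the $L^{2}_{\omega_{\alpha,0}}$ inner product to telescope the $\mathcal{L}$-term, treat the $\omega_{\alpha,1}$-term and the nonlocal term with the exact identity $2a(a-\tau_{k}b)=(1-\tau_{k}^{2})a^{2}+\tau_{k}^{2}(a^{2}-b^{2})+(a-\tau_{k}b)^{2}$, bound the source via Young's inequality and $\int_{0}^{1}\omega_{\alpha,1}\,d\theta=1-\alpha$, and sum over $n$. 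However, your ``main obstacle'' is not an obstacle, and your hedging there contains an error: in \eqref{semi-discrete} the nonlocal term is the bare scalar $-\lambda\mathcal{C}[\varphi^{n+1}]$, because multiplying EPDE \eqref{equation} by $1-\theta$ cancels $c_{0}(\theta)=1/(1-\theta)$; hence testing against $\omega_{\alpha}(\varphi^{n+1}-\tau_{k}\varphi^{n})$ collapses it exactly to $-\lambda\phi^{n+1}(\phi^{n+1}-\tau_{k}\phi^{n})$. The cross-term you wrote with an extra factor $(1-\theta)$ is wrong, and no weight splitting or second test against $\omega_{\alpha,1}$ is needed (this is precisely \eqref{stability_term3}). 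Note also that you need the identity, not just a one-sided inequality: the square term $\Vert\varphi^{n+1}-\tau_{k}\varphi^{n}\Vert^{2}_{L^{2}_{\omega_{\alpha,1}}}$ it produces is exactly what absorbs the other half of the Young bound on the $\theta\phi_{0}$ source term.

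The genuine gap is your treatment of the starting levels. The telescoping of the $\tau_{k}^{2}(a^{2}-b^{2})$ pieces leaves boundary terms $\tfrac{\Delta t\tau_{k}^{2}}{2}\Vert\varphi^{k-1}\Vert^{2}_{L^{2}_{\omega_{\alpha,1}}}$ and $\tfrac{\Delta t\lambda\tau_{k}^{2}}{2}\vert\phi^{k-1}\vert^{2}$ at $n=k-1$, which your sketch never accounts for; the paper bounds them by applying the same stability estimate to the lower-order BDF schemes that generate the starting values (its inequality \eqref{semi-discrete 4}), and this is exactly where the prefactor $1/(1-\tau_{k}^{2})$ in \eqref{eq:semidis-sta} comes from, via $1+\tau_{k}^{2}/(1-\tau_{k}^{2})=1/(1-\tau_{k}^{2})$. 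Your alternative of assuming $\varphi^{0}=\cdots=\varphi^{k-1}=\phi_{0}$ and then ``dividing through by $1-\tau_{k}^{2}$'' does not produce the stated estimate: dividing would also rescale the $\lambda_{min}\Vert\varphi^{N+1}\Vert^{2}_{L^{2}_{\omega_{\alpha,0}}}$ term, and constant starting values are not part of a BDF-$k$ start. So as written, the final constant and the control of $\varphi^{k-1}$, $\phi^{k-1}$ are unjustified; the rest (the $\lambda_{max}\alpha\vert\phi_{0}\vert^{2}$ bound for the initial $G$-norm via \eqref{eq:alp}, and the $\tfrac{T}{2}(1-\alpha)\vert\phi_{0}\vert^{2}$ term from summing the source bound) matches the paper's argument.
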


    \begin{proof}
         Multiplying both sides of \eqref{semi-discrete} by $(\varphi^{n+1} - \tau_{k}\varphi^{n})\omega_{\alpha}\Delta t$, then integrating with respect to $\theta$ from 0 to 1, we obtain
        \begin{equation}\label{semi-discrete_inner}
        \begin{aligned}
&\left(\mathcal{L}\varphi^{n+1}, \varphi^{n+1} - \tau_{k}\varphi^{n}\right)_{L_{\omega_{\alpha,0}}^{2}} + \Delta t \left(\varphi^{n+1}, \varphi^{n+1}
- \tau_{k}\varphi^{n}\right)_{L_{\omega_{\alpha,1}}^{2}} \\
&+ \Delta t\lambda\left(\mathcal{C}[\varphi^{n+1}],\varphi^{n+1}
- \tau_{k}\varphi^{n}\right)_{L_{\omega_{\alpha}}^{2}}= \Delta t\left(\phi_{0},  \varphi^{n+1}-\tau_{k}\varphi^{n}\right)_{L_{\omega_{\alpha,1}}^{2}}.
            \end{aligned}
        \end{equation}
For the second term of \eqref{semi-discrete_inner}, it follows from $2a(a-b)= a^{2} - b^{2} + (a - b)^{2} $ and $0\leq\tau_{k}<1$ that 
    \begin{equation}\label{stability_term2}
        \begin{aligned}
             &2\left(\varphi^{n+1}, \varphi^{n+1} - \tau_{k}\varphi^{n}\right)_{L_{\omega_{\alpha,1}}^{2}}\\
            & = (\Vert\varphi^{n+1}\Vert_{L_{\omega_{\alpha,1}}^{2}}^{2} -  \tau_{k}^{2}\Vert\varphi^{n}\Vert_{L_{\omega_{\alpha,1}}^{2}}^{2}) + (\Vert\varphi^{n+1} - \tau_{k}\varphi^{n}\Vert^{2}_{L_{\omega_{\alpha,1}}^{2}})\\
            & = (1-\tau_{k}^{2})\Vert\varphi^{n+1}\Vert_{L_{\omega_{\alpha,1}}^{2}}^{2} + \tau_{k}^{2} \left(\Vert\varphi^{n+1}\Vert_{L_{\omega_{\alpha,1}}^{2}}^{2} - \Vert\varphi^{n}\Vert_{L_{\omega_{\alpha,1}}^{2}}^{2}\right)
             + \Vert\varphi^{n+1} - \tau_{k}\varphi^{n}\Vert^{2}_{L_{\omega_{\alpha,1}}^{2}}.
        \end{aligned}
    \end{equation}

    Similarly, for the third term of \eqref{semi-discrete_inner}, we have
\begin{equation}\label{stability_term3}
\begin{aligned}
    &2\left(\mathcal{C}[\varphi^{n+1}], \varphi^{n+1} - \tau_{k}\varphi^{n}\right)_{L_{\omega_{\alpha}}^{2}} = 2\mathcal{C}[\varphi^{n+1}]\left( \mathcal{C}[\varphi^{n+1}] - \tau_{k}\mathcal{C}[\varphi^{n}]\right)\\
    & = (1-\tau_{k}^{2})\vert\phi^{n+1}\vert^{2} + \tau_{k}^{2}\left(\vert\phi^{n+1}\vert^{2} - \vert\phi^{n}\vert^{2}\right) + \vert\phi^{n+1} - \tau_{k}\phi^{n}\vert^{2},
\end{aligned}
\end{equation}
where we use equality \eqref{eq:transform}.
Thus, by using Cauchy-Schwarz and Young inequalities, we deduce from \eqref{eq:lemma1}, \eqref{semi-discrete_inner}-\eqref{stability_term3} that 
       \begin{equation}\label{combining}
       \begin{aligned}
            &\sum\limits_{i,j = 1}^{k}g_{ij}\left(\varphi^{n+1+i-k},\varphi^{n+1+j-k}\right)_{L_{\omega_{\alpha,0}}^{2}} - \sum\limits_{i,j = 1}^{k}g_{ij}\left(\varphi^{n+i-k},\varphi^{n+j-k}\right)_{L_{\omega_{\alpha,0}}^{2}} \\
            &+ \frac{\Delta t}{2}(1-\tau_{k}^{2})\Vert\varphi^{n+1}\Vert_{L_{\omega_{\alpha,1}}^{2}}^{2} + \frac{\Delta t\tau_{k}^{2}}{2} \left(\Vert\varphi^{n+1}\Vert_{L_{\omega_{\alpha,1}}^{2}}^{2} - \Vert\varphi^{n}\Vert_{L_{\omega_{\alpha,1}}^{2}}^{2}\right) \\
            &+\frac{\Delta t\lambda}{2}(1-\tau_{k}^{2})\vert\phi^{n+1}\vert^{2} + \frac{\Delta t\tau_{k}^{2}\lambda}{2} \left(\vert\phi^{n+1}\vert^{2} - \vert\phi^{n}\vert^{2}\right) 
            \leq\frac{\Delta t}{2}\Vert\phi_{0}\Vert_{L_{\omega_{\alpha,1}}^{2}}^{2},
       \end{aligned}
       \end{equation}
where we use the equality \eqref{eq:transform}. Taking the summation of \eqref{combining} for $n$ from $k-1$ to $N$ and using \eqref{eq:alp}, we obtain 
         \begin{equation}\label{semi-discrete 21}
       \begin{aligned}
&\lambda_{min}\Vert\varphi^{N+1}\Vert_{L_{\omega_{\alpha,0}}^{2}}^{2}+ \frac{\Delta t}{2}(1 - \tau_{k}^{2})\sum\limits_{n = k-1}^{N}\Vert\varphi^{n+1}\Vert_{L_{\omega_{\alpha,1}}^{2}}^{2}\\
&+ \frac{\Delta t\tau_{k}^{2}}{2}\Vert\varphi^{N+1}\Vert_{L_{\omega_{\alpha,1}}^{2}}^{2}+ \frac{\Delta t\lambda}{2}(1-\tau_{k}^{2})\sum\limits_{n=k-1}^{N}\vert\phi^{n+1}\vert^{2} + \frac{\Delta t\lambda}{2}\tau_{k}^{2}\vert\phi^{N+1}\vert^{2}\\
            &\leq   \bigg[ \lambda_{max}\alpha 
 +\frac{T}{2}(1-\alpha) \bigg]\vert\phi_{0}\vert^{2} +  \frac{\Delta t\tau_{k}^{2}}{2}\Vert\varphi^{k-1}\Vert_{L_{\omega_{\alpha,1}}^{2}}^{2} + \frac{\Delta t\lambda}{2}\tau_{k}^{2}\vert\phi^{k-1}\vert^{2},
       \end{aligned}
       \end{equation}
where $\lambda_{min}, \lambda_{max}$ are the minimum eigenvalue and maximum eigenvalue of the positive definite symmetric matrix $G = (g_{ij})$, respectively.

For the last two terms of the above estimate, we bound them by utilizing the low-order BDF-$k$ scheme similarly:
         \begin{equation}\label{semi-discrete 4}
             \frac{\Delta t\tau_{k}^{2}}{2}\Vert\varphi^{k-1}\Vert_{L_{\omega_{\alpha,1}}^{2}}^{2} +\frac{\Delta t\lambda\tau_{k}^{2}}{2}\vert\phi^{k-1}\vert^{2}\leq\frac{\tau_{k}^{2}}{1 - \tau_{k}^{2}}\bigg[ \lambda_{max}\alpha 
 +\frac{T}{2}(1-\alpha) \bigg]\vert\phi_{0}\vert^{2}.
         \end{equation}
Then the estimate \eqref{eq:semidis-sta} follows by \eqref{semi-discrete 21} and \eqref{semi-discrete 4}.
\end{proof}
\subsubsection{Spectral collocation method for the $\theta$ direction}
We now show the spectral approximation for the $\theta$ direction and consequently give the fully discretization scheme.
Let $\{\theta_j\}_{j=0}^M$ be the Jacobi-Gauss points in $[0,1]$ with respect to the weight $\Gamma(\alpha) \Gamma(1-\alpha)\omega_{\alpha}(\theta) = \theta^{-\alpha}(1-\theta)^{\alpha - 1}$, and $\{h_j(\theta)\}_{j=0}^M$ be the Lagrange interpolation functions with respect to the Jacobi-Gauss points $\{\theta_j\}_{j=0}^M$, respectively. We then approximate the function $\varphi^n(\theta)$ by 
\begin{equation*}
    \varphi^{n}_{M}(\theta) = \sum\limits_{j=0}^{M}\varphi^{n}(\theta_{j})h_{j}(\theta),
\end{equation*}
and compute $\phi^n_M:=\mathcal{C}[\varphi^{n}_M]$ by using the Gauss-Jacobi quadrature, namely, 
\begin{equation*}\label{Cphi}
\begin{aligned}
     \mathcal{C}[\varphi^{n}_M] = \int_{0}^{1} \varphi^{n}_M(\theta)\omega_{\alpha}(\theta)d\theta
    =  \sum\limits_{j = 0}^{M}\varphi_{M}^{n}(\theta_{j})\omega_{j},
\end{aligned}
\end{equation*}
where $\omega_{j} = \int_{0}^{1}h_{j}(\theta)\omega_{\alpha}(\theta)d\theta,\, j = 0,\ldots,M$ are the corresponding Jacobi-Guass weights.

Now we give the spectral collocation scheme for the semi-discrete problem \eqref{semi-discrete}: for $0\le s\le M$, 
\begin{equation}\label{fully scheme}
     \frac{(1 - \theta_{s})}{\Delta t}\mathcal{L}\varphi_{M}^{n+1}(\theta_{s}) + \theta_{s} \varphi^{n+1}_{M}(\theta_{s}) = -\lambda\mathcal{C}[\varphi_{M}^{n+1}] + \theta_{s}\phi_{0}.
\end{equation}
Denote $P_M$ as the set of all algebraic polynomials of degree $\le M$, we have 
\begin{equation*}
    P_{M} = \text{span}\{ h_{j}(\theta):0\leq j \leq M\}.
\end{equation*}
Note that $\forall~ q_M\in P_M$, we have $(1-\theta) \varphi_M q_M,\; \theta \varphi_M q_M \in P_{2M+1}$, then the spectral collocation formula \eqref{fully scheme} is equivalent to the following Galerkin form: Find $\varphi_{M}^{n+1} \in P_M$, such that 
\begin{equation}\label{fully discrete formula}
\begin{aligned}
    &\frac{1}{\Delta t}\left(\mathcal{L}\varphi^{n+1}_{M},q_{M}\right)_{L_{\omega_{\alpha,0}}^{2}} + (\varphi_{M}^{n+1},q_{M})_{L_{\omega_{\alpha,1}}^{2}} + \lambda(\mathcal{C}[\varphi_{M}^{n+1}], q_{M})_{L_{\omega_{\alpha}}^{2}} \\
    &=  (\phi_{0},q_{M})_{L_{\omega_{\alpha,1}}^{2}}\quad \forall q_M \in P_M.
\end{aligned}
\end{equation}

Therefore, by using the same argument used for Theorem \ref{stability}, we conclude that the fully discretization scheme \eqref{fully scheme} is stable. 
\begin{theorem}\label{stability of fully discrete scheme}
    For $1\leq k\leq 5$, the fully-discrete scheme \eqref{fully scheme} with $\lambda \geq 0$ is stable in the sense that,
    \begin{equation}
         \begin{aligned}    &\lambda_{min}\Vert\varphi_{M}^{N+1}\Vert_{L_{\omega_{\alpha,0}}^{2}}^{2} + \frac{\Delta t}{2}(1-\tau_{k}^{2})\sum\limits_{n = k-1}^{N}\left(\Vert\varphi^{n+1}_{M}\Vert_{L_{\omega_{\alpha,1}}^{2}}^{2}  + \lambda\left\vert\mathcal{C}[\varphi_{M}^{n+1}]\right\vert^{2}\right)\\
            &\leq\frac{1}{1 - \tau_{k}^{2}}\bigg[ \lambda_{max}\alpha 
 +\frac{T}{2}(1-\alpha) \bigg]\vert\phi_{0}\vert^{2},
            \end{aligned}
    \end{equation}
     where $\phi_{0}$ is the initial value, $\tau_{k}$ are list in Lemma \ref{lemma1},  $\lambda_{min}$ and  $\lambda_{max}$ are the minimum eigenvalue and the maximum eigenvalue of the positive definite symmetric matrix $G = (g_{ij})$, respectively.
\end{theorem}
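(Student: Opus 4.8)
The plan is to repeat, at the fully discrete level, the energy argument used for Theorem \ref{stability}, working from the Galerkin reformulation \eqref{fully discrete formula} of the collocation scheme \eqref{fully scheme}. The point that makes this transfer possible is that the trial/test space in \eqref{fully discrete formula} is $P_M$: the increment $q_M:=\varphi_M^{n+1}-\tau_k\varphi_M^n$ again lies in $P_M$ and is therefore an admissible test function, and since $\mathcal{L}\varphi_M^{n+1}\in P_M$ while $(1-\theta)\varphi_M q_M$ and $\theta\varphi_M q_M$ lie in $P_{2M+1}$, the $(M+1)$-point Gauss--Jacobi quadrature underlying $\mathcal{C}[\cdot]$ and the weighted forms $(\cdot,\cdot)_{L^2_{\omega_{\alpha,0}}}$, $(\cdot,\cdot)_{L^2_{\omega_{\alpha,1}}}$, $(\cdot,\cdot)_{L^2_{\omega_\alpha}}$ is exact on every product that occurs. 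Since $\omega_{\alpha,0},\omega_{\alpha,1},\omega_\alpha$ are nonnegative on $(0,1)$, these discrete forms are \emph{genuine} inner products on $P_M$, and every manipulation below is legitimate.

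First I would take $q_M=\varphi_M^{n+1}-\tau_k\varphi_M^n$ in \eqref{fully discrete formula}, multiply through by $\Delta t$, and obtain the exact analogue of \eqref{semi-discrete_inner} with $\varphi^n$ replaced by $\varphi_M^n$ and $\phi^n$ replaced by $\phi_M^n:=\mathcal{C}[\varphi_M^n]$. To the first term $(\mathcal{L}\varphi_M^{n+1},\varphi_M^{n+1}-\tau_k\varphi_M^n)_{L^2_{\omega_{\alpha,0}}}$ I would apply Lemma \ref{lemma1} with the inner product chosen to be $(\cdot,\cdot)_{L^2_{\omega_{\alpha,0}}}$ --- the identity \eqref{eq:lemma1} is purely algebraic, so this choice is permitted --- keeping the telescoping $G$-quadratic forms and discarding the nonnegative square. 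To the second term I would apply $2a(a-b)=a^2-b^2+(a-b)^2$ in $L^2_{\omega_{\alpha,1}}$ exactly as in \eqref{stability_term2}. To the third term I would use that $\mathcal{C}[\varphi_M^{n+1}]$ is independent of $\theta$, so that $(\mathcal{C}[\varphi_M^{n+1}],q_M)_{L^2_{\omega_\alpha}}=\mathcal{C}[\varphi_M^{n+1}]\,\mathcal{C}[q_M]=\phi_M^{n+1}(\phi_M^{n+1}-\tau_k\phi_M^n)$, and split it via $2a(a-b)=a^2-b^2+(a-b)^2$ as in \eqref{stability_term3}. Bounding the right-hand side $\Delta t(\phi_0,\varphi_M^{n+1}-\tau_k\varphi_M^n)_{L^2_{\omega_{\alpha,1}}}$ by Cauchy--Schwarz and Young, and absorbing $\tfrac{\Delta t}{2}\|\varphi_M^{n+1}-\tau_k\varphi_M^n\|^2_{L^2_{\omega_{\alpha,1}}}$ into the square generated by the second term, then reproduces the fully discrete counterpart of \eqref{combining}.

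Next I would sum this inequality over $n$ from $k-1$ to $N$. The $G$-quadratic forms telescope, leaving a term bounded below by $\lambda_{min}\|\varphi_M^{N+1}\|^2_{L^2_{\omega_{\alpha,0}}}$ together with a startup contribution bounded above, via \eqref{eq:alp}, by $\lambda_{max}\alpha|\phi_0|^2$; the $\tau_k^2$-weighted differences of $\|\cdot\|^2_{L^2_{\omega_{\alpha,1}}}$ and $|\cdot|^2$ telescope likewise; and the sum of the right-hand sides $\tfrac{\Delta t}{2}\|\phi_0\|^2_{L^2_{\omega_{\alpha,1}}}$ becomes $\tfrac{T}{2}(1-\alpha)|\phi_0|^2$ by \eqref{eq:alp}. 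This yields the analogue of \eqref{semi-discrete 21}. Finally I would absorb the two remaining startup terms $\tfrac{\Delta t\tau_k^2}{2}\|\varphi_M^{k-1}\|^2_{L^2_{\omega_{\alpha,1}}}+\tfrac{\Delta t\lambda\tau_k^2}{2}|\phi_M^{k-1}|^2$ using the bound already established for the lower-order BDF scheme, exactly as in \eqref{semi-discrete 4}, and rearrange to reach \eqref{eq:semidis-sta} with $\varphi$ and $\phi$ replaced by $\varphi_M$ and $\mathcal{C}[\varphi_M]$.

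I do not anticipate a real obstacle: the only thing requiring attention is the (routine) degree-counting that guarantees all the products involved --- $(1-\theta)\mathcal{L}\varphi_M^{n+1}q_M$, $\theta\varphi_M^{n+1}q_M$, $q_M^2$ against the weights, and the integrand of $\mathcal{C}[q_M]$ --- have degree at most $2M+1$, so that the Gauss--Jacobi quadrature is exact and the weighted forms act as true inner products on $P_M$; granted this, the proof of Theorem \ref{stability} applies line by line.
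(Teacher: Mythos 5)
Your proposal is correct and follows essentially the same route as the paper: the paper establishes the equivalence of the collocation scheme \eqref{fully scheme} with the Galerkin form \eqref{fully discrete formula} (via the same $P_{2M+1}$ degree-counting and Gauss--Jacobi exactness you invoke) and then simply repeats the energy argument of Theorem \ref{stability} with $q_M=\varphi_M^{n+1}-\tau_k\varphi_M^n$, Lemma \ref{lemma1} in the $L^2_{\omega_{\alpha,0}}$ inner product, the identity $2a(a-b)=a^2-b^2+(a-b)^2$, summation with telescoping, and the startup-term bound as in \eqref{semi-discrete 4}. Your write-up fills in precisely the steps the paper compresses into ``by using the same argument used for Theorem \ref{stability}.''
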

\subsubsection{Implementation}
Now let us briefly give a description for the efficient implementation. 
We write the scheme \eqref{fully scheme} into the following matrix form 
\begin{equation}\label{eq:system}
    \mathbf{A} \mathbf{\Phi}^{n+1} = \sum\limits_{j=0}^{k-1}\mathbf{B}_{j} \mathbf{\Phi}^{n-j} + \mathbf{F},
\end{equation}
where 
\begin{equation*}
\begin{aligned}
     &\mathbf{A} = {\rm{diag}}(A) + \Delta t\lambda \mathbf{J}_{M+1}\mathbf{W},\quad A = \left[\alpha_{k}(1-\theta_{0}) + \Delta t\theta_{0},\ldots,\alpha_{k}(1-\theta_{M}) + \Delta t\theta_{M}\right]^{T},\\
     &\mathbf{B}_{j} = b_{j}^{(k)} {\rm{diag}}(B),\quad
    B = \left[1-\theta_{0},\ldots,1-\theta_{M}\right]^{T},\quad \mathbf{F} = \Delta t\phi_{0}\left[\theta_{0},\ldots,\theta_{M}\right]^{T},\\
     &\mathbf{W} = {\rm{diag}}(W),\quad W = \left[\omega_{0},\ldots,\omega_{M}
     \right]^{T},\quad  \mathbf{\Phi}^j = \left[\varphi^j(\theta_{0}),\ldots,\varphi^j(\theta_{M})\right]^{T},
\end{aligned}
\end{equation*}
and $\Delta t$ is time step size, $\mathbf{J}_{M+1}$ is the all-one $(M+1)\times (M+1)$ matrix.

Due to the stability of the scheme \eqref{fully scheme}, we have that $\mathbf{A}$ is non-singular. Then we have the following characteristic decomposition of $\mathbf{A}$:
\begin{equation*}\label{CD}
    \mathbf{X\Lambda} \mathbf{X}^{-1} = \mathbf{A},
\end{equation*}
where $\mathbf{X}$ is the matrix whose columns are the corresponding eigenvectors of $\mathbf{A}$ and $\mathbf{\Lambda}$ is the diagonal
matrix whose diagonal entries are the eigenvalues of $\mathbf{A}$, respectively.
Multiply both sides of \eqref{eq:system} with $\mathbf{X}^{-1}$ and define the vectors
\begin{equation}\label{eq:YX}
    \mathbf{Y} = \mathbf{X}^{-1} \mathbf{\Phi}^{n+1},
    \quad \mathbf{\hat{Y}} = \mathbf{\Lambda Y},
\end{equation}
we have 
\begin{equation*}\label{fast scheme}
    \begin{aligned}
        \mathbf{\hat{Y}} = \mathbf{\Lambda Y} = \mathbf{X}^{-1}\left(\sum\limits_{j=0}^{k-1}\mathbf{B}_{j} \mathbf{\Phi}^{n-j} + \mathbf{F}\right).
       % & \mathbf{\Phi^{n+1}} = \mathbf{XY}.
    \end{aligned}
\end{equation*}

Thus, we shall obtain $\mathbf{\hat{Y}}$ by using the above equation. Consequently, we compute $\mathbf{Y}$ and $\mathbf{\Phi}^{n+1} = \mathbf{XY}$ by using \eqref{eq:YX}.
We summarize the implementation in the following Algorithm \ref{fast algorithm}:
\begin{algorithm}[htbp] 
	\caption{Efficient implementation for the fully-discrete scheme \eqref{fully scheme}}     
	 \label{fast algorithm}  
	\begin{algorithmic}[1]
	\Require $\mathbf{A},\, \mathbf{B}_j,\,\mathbf{\Phi}^{n-j},\, j=0,\ldots, k-1$ 
     \Ensure  $\mathbf{\Phi}^{N+1}$ 
     \State  Calculate $\mathbf{X}^{-1}$ and $\mathbf{\Lambda}$ by $\mathbf{X\Lambda} \mathbf{X}^{-1} = \mathbf{A}$
    \For {$n=k-1,...,N$ }
         \State We compute $\mathbf{\hat{Y}}$ by $\mathbf{\hat{Y}}  = \mathbf{X}^{-1}\left(\sum\limits_{j=0}^{k-1}\mathbf{B}_{j} \mathbf{\Phi}^{n-j} + \mathbf{F}\right)$
        \State We compute $\mathbf{Y}$ by $\mathbf{\hat{Y}} = \mathbf{\Lambda Y}$
        \State We compute $\mathbf{\Phi}^{n+1}$ by $\mathbf{\Phi}^{n+1} = \mathbf{X}\mathbf{Y} $
  \EndFor
  \State\Return  $\mathbf{\Phi}^{N+1}$ 
   \end{algorithmic} 
\end{algorithm}

\begin{remark}
We observe from the above that \emph{the computational cost and the storage requirement of Algorithm \ref{fast algorithm} for the present scheme \eqref{fully scheme} are $O(N)$ and $O(1)$}, respectively, for a \emph{fixed} value of $M$, i.e., a \emph{fixed} degree of freedom for the spectral approximation in the $\theta$ direction.
\end{remark}

\begin{remark}
We point out here that a \emph{small fixed} value of $M$ (says $M=30$) is enough to deliver a high accuracy since we observe from numerical results (see Figure \ref{Fig:ex1:theta} and \ref{4a}) that the spectral accuracy is achieved for the spectral discretization in the $\theta$ direction for both linear and nonlinear problems, moreover, this is guaranteed by the theoretical result for the linear case, i.e., the solutions of the EPDE have high regularity with respect to $\theta$. 
\end{remark}
\subsection{Error estimate}\label{sec:errorest}
We now give the error analysis for the proposed scheme \eqref{fully scheme}.

%%%%%%%%%%%%%%%%%%%%%%%%%%%%%%%%%%%%%%%%%%%%%%%
\subsubsection{Preliminary}
We first provide several preliminary knowledge for the spectral approximation.
Let $\omega^{a,b}(\theta) = (1-\theta)^{a}\theta^{b}, a,b > -1$ and $\Omega = (0,1)$. We define the Jacobi-weighted Sobolev space:
\begin{equation*}
    B_{\alpha-1,-\alpha}^{m}(\Omega) = \left\{v:\frac{d ^{l}v}{d \theta^{l}} (\theta)\in L_{\omega^{\alpha-1+l,-\alpha+l}}^{2}(\Omega),\quad 0\leq l\leq m \right\}.
\end{equation*}
Then for any $v\in B_{\alpha-1,-\alpha}^{m}(\Omega),\, 0\leq m\leq M$, up to a constant, it holds~(see \cite[Theorem 3.35]{SM})
\begin{equation}\label{projection}
    \Vert\Pi_{M}^{\alpha-1,-\alpha}v - v\Vert_{L_{\omega^{\alpha-1,-\alpha}}^{2}}\leq C M^{-m}\Vert\frac{d^{m}v}{d \theta^{m}}\Vert_{L_{\omega^{\alpha-1+m,-\alpha+m}}^{2}},
\end{equation}
where $C$ is a constant and $\Pi_{M}^{\alpha-1,-\alpha}: L^2_{\omega^{\alpha-1, -\alpha}}(\Omega) \rightarrow P_M(\Omega)$ is the $L^2_{\omega^{\alpha-1, -\alpha}}$ projection. 

Now we define the bilinear form 
\begin{equation}\label{eq:bilinearform}
    \mathcal{A}(\varphi,q) = \left(\varphi,q\right)_{L_{\omega_{\alpha}}^{2}} + \lambda\left(\int_{0}^{1}\varphi\omega_{\alpha}d\theta,q\right)_{L_{\omega_{\alpha}}^{2}}
    = \left(\varphi,q\right)_{L_{\omega_{\alpha}}^{2}} + \lambda \left(\mathcal{C}[\varphi],q\right)_{L_{\omega_{\alpha}}^{2}}
\end{equation}
and $X := \{\varphi|\Vert\varphi\Vert_{X}\leq\infty\}$ with the associated norm given by 
\begin{equation*}
    \Vert\varphi\Vert_{X}^{2} := \mathcal{A}(\varphi,\varphi) =  \Vert\varphi\Vert_{L_{\omega_{\alpha}}^{2}}^{2} + \lambda\left(\int_{0}^{1}\varphi\omega_{\alpha}d\theta\right)^{2}.
\end{equation*}

Consider the following weak problem: Find $\varphi\in X$, such that
\begin{equation*}
    \mathcal{A}(\varphi,q) = (f,q) \quad \forall q\in X.
\end{equation*}
Then, for $f\in X^{*}$, where $X^{*}$ is the dual space of $X$,  we have that the above problem admits a unique solution since 
\begin{equation*}
   \Vert\varphi\Vert_{L_{\omega_{\alpha}}^{2}}^{2} \le \mathcal{A}(\varphi, \varphi) \le \bar{C} \Vert\varphi\Vert_{L_{\omega_{\alpha}}^{2}}^{2}, \quad \mathcal{A}(\varphi, q) \le \hat{C}\Vert\varphi\Vert_{L_{\omega_{\alpha}}^{2}} \Vert q \Vert_{L_{\omega_{\alpha}}^{2}},
\end{equation*}
where $\bar{C},\, \hat{C}$ are two constants. The first equation in the above also indicates that $\Vert\varphi\Vert_{L_{\omega_{\alpha}}^{2}}\approx \Vert\varphi\Vert_{X}.$

Now let us define the projection $\Pi_{M}:X\rightarrow P_{M}$ such that
\begin{equation}\label{projection bilinear}
    \mathcal{A}(\varphi - \Pi_{M}\varphi,q_{M}) = 0\quad \forall q_{M}\in P_{M}.
\end{equation}
Immediately, one can easily obtain the following estimates.
\begin{lemma}\label{projection lemma}
For $\varphi\in B_{\alpha - 1,-\alpha}^{m}(\Omega)$, we have 
    \begin{equation}\label{eq:prj:01}
         \Vert \varphi - \Pi_{M}\varphi \Vert_{X}\leq C M^{-m}\Vert\frac{\partial^{m}\varphi}{\partial \theta^{m}}\Vert_{L_{\omega^{\alpha-1+m,-\alpha+m}}^{2}}.
\end{equation}
\end{lemma}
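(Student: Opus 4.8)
The plan is to exploit the near-equivalence $\|\varphi\|_{X} \approx \|\varphi\|_{L^2_{\omega_\alpha}}$ established just above, together with the standard Jacobi projection estimate \eqref{projection}, via a Céa-type argument for the projection $\Pi_M$ defined in \eqref{projection bilinear}. First I would observe that, because $\Pi_M\varphi$ is the $\mathcal{A}$-orthogonal projection of $\varphi$ onto $P_M$, it is the best approximation to $\varphi$ from $P_M$ in the $X$-norm: for any $q_M\in P_M$,
\begin{equation*}
\|\varphi - \Pi_M\varphi\|_X^2 = \mathcal{A}(\varphi - \Pi_M\varphi,\, \varphi - \Pi_M\varphi) = \mathcal{A}(\varphi - \Pi_M\varphi,\, \varphi - q_M) \le \|\varphi - \Pi_M\varphi\|_X\,\|\varphi - q_M\|_X,
\end{equation*}
using \eqref{projection bilinear} and the Cauchy–Schwarz inequality for the inner product $\mathcal{A}(\cdot,\cdot)$ (which is a genuine inner product since $\mathcal{A}(\varphi,\varphi)\ge \|\varphi\|_{L^2_{\omega_\alpha}}^2 \ge 0$ and the bilinear form is symmetric and bounded). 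Hence $\|\varphi - \Pi_M\varphi\|_X \le \inf_{q_M\in P_M}\|\varphi - q_M\|_X$.

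Next I would pick the particular competitor $q_M = \Pi_M^{\alpha-1,-\alpha}\varphi$, the $L^2_{\omega^{\alpha-1,-\alpha}}$ projection appearing in \eqref{projection}. Using the upper bound $\mathcal{A}(v,v)\le \bar C\|v\|_{L^2_{\omega_\alpha}}^2$ with $v = \varphi - \Pi_M^{\alpha-1,-\alpha}\varphi$, and noting that $\omega_\alpha$ is (up to the constant $\Gamma(\alpha)\Gamma(1-\alpha)$) exactly the weight $\omega^{\alpha-1,-\alpha}$, we get
\begin{equation*}
\|\varphi - \Pi_M\varphi\|_X \le \|\varphi - \Pi_M^{\alpha-1,-\alpha}\varphi\|_X \le \bar C^{1/2}\,\|\varphi - \Pi_M^{\alpha-1,-\alpha}\varphi\|_{L^2_{\omega_\alpha}} \le C\,\|\varphi - \Pi_M^{\alpha-1,-\alpha}\varphi\|_{L^2_{\omega^{\alpha-1,-\alpha}}}.
\end{equation*}
Then I would apply \eqref{projection} directly to the right-hand side to bound it by $C M^{-m}\,\|\partial_\theta^{m}\varphi\|_{L^2_{\omega^{\alpha-1+m,-\alpha+m}}}$, which is exactly \eqref{eq:prj:01}. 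Combining the three displayed chains and absorbing all constants into a single $C$ closes the argument.

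The only genuinely delicate point — hardly an obstacle, but worth checking carefully — is making sure the constant $\bar C$ (equivalently the boundedness of $\mathcal{A}$, which hides the factor $\lambda$ and the embedding $\mathcal{C}[\varphi] = \int_0^1 \varphi\,\omega_\alpha\,d\theta$) is uniform in $M$; this is immediate here since $\bar C$ is fixed by the problem data via the inequality $\|\varphi\|_{L^2_{\omega_\alpha}}^2 \le \mathcal{A}(\varphi,\varphi) \le \bar C\|\varphi\|_{L^2_{\omega_\alpha}}^2$ already recorded in the excerpt. Everything else is a routine concatenation of the best-approximation property of the energy projection with the off-the-shelf Jacobi approximation estimate \eqref{projection}.
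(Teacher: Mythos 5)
Your proposal is correct and follows essentially the same route as the paper: the Galerkin orthogonality \eqref{projection bilinear} gives the best-approximation (C\'ea-type) bound in the $X$-norm, you then choose the competitor $\Pi_M^{\alpha-1,-\alpha}\varphi$, use the norm equivalence $\Vert\cdot\Vert_X \approx \Vert\cdot\Vert_{L^2_{\omega_\alpha}}$ together with $\Gamma(\alpha)\Gamma(1-\alpha)\omega_\alpha = \omega^{\alpha-1,-\alpha}$, and finish with the Jacobi estimate \eqref{projection}. The only cosmetic difference is that you invoke Cauchy--Schwarz for the inner product $\mathcal{A}(\cdot,\cdot)$ directly (noting its symmetry), whereas the paper uses the continuity bound of $\mathcal{A}$ with a generic constant; both are fine.
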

\begin{proof} We obtain from \eqref{projection bilinear} that 
\begin{equation*}
\begin{aligned}
     \Vert\varphi - \Pi_{M}\varphi\Vert_{X}^{2} &= \mathcal{A}(\varphi - \Pi_{M}\varphi,\varphi - \Pi_{M}\varphi)\\
     & = \mathcal{A}(\varphi - \Pi_{M}\varphi,\varphi - \nu_{M})\\
     &\le C\Vert\varphi - \Pi_{M}\varphi\Vert_{X}\Vert\varphi - \nu_{M}\Vert_{X} \quad \forall \nu_{M}\in P_{M}.
\end{aligned}    
\end{equation*}
Then we take $\nu_{M} = \Pi_{M}^{\alpha-1,-\alpha}\varphi\in P_{M}$ in the above equation to yield
\begin{equation*}
\begin{aligned}
    \Vert\varphi - \Pi_{M}\varphi\Vert_{X}\le C\Vert\varphi - \Pi_{M}^{\alpha-1,-\alpha}\varphi\Vert_{X}\le \tilde{C}\Vert\varphi - \Pi_{M}^{\alpha-1,-\alpha}\varphi\Vert_{L_{\omega_{\alpha}}^{2}},
\end{aligned}
\end{equation*}
where $\tilde{C}$ is a constant.
%thanks to $\Vert\varphi\Vert^{2}_{X} = \Vert\varphi\Vert_{L_{\omega_{\alpha}}^{2}}^{2} + \lambda\left(\int_{0}^{1}\varphi\omega_{\alpha}d\theta\right)^{2}\le 2C \Vert\varphi\Vert_{L_{\omega_{\alpha}}^{2}}^{2}$. 
Note that $\Gamma(\alpha)\Gamma(1-\alpha)\omega_{\alpha}(\theta) = \omega^{\alpha-1,-\alpha}(\theta)$. Hence, the estimate \eqref{eq:prj:01} follows by  \eqref{projection} and the above equation.  
\end{proof}

\subsubsection{Error estimate}
Now we proceed to derive the error estimates for the fully discretization scheme \eqref{fully scheme}.
To this end, we develop a Galerkin form for EPDE \eqref{equation} with $F(t,\phi) = -\lambda \phi$ by adding $c_0(\theta) \varphi$ on both sides: Find $\varphi \in L^2_{\omega_{\alpha}}(\Omega)$, such that 
\begin{equation}\label{CP}
    \begin{aligned}
        \left(\frac{\partial \varphi}{\partial t} (t),q\right)_{L_{\omega_{\alpha,0}}^{2}} + \mathcal{A}(\varphi(t),q) =  \left(\varphi(t),q\right)_{L_{\omega_{\alpha,0}}^{2}} + \left(\phi_{0},q\right)_{L_{\omega_{\alpha,1}}^{2}} \quad \forall q \in  L^2_{\omega_{\alpha}}(\Omega),
    \end{aligned}
\end{equation}
where the bilinear form $\mathcal{A}(\cdot,\cdot)$ is defined in \eqref{eq:bilinearform}.
%thanks to $\Vert\cdot\Vert_{L_{\omega_{\alpha}}}^{2} = \Vert\cdot\Vert_{L_{\omega_{\alpha,0}}}^{2} + \Vert\cdot\Vert_{L_{\omega_{\alpha,1}}}^{2}$.

% For the sake of convenience, we set
% \begin{equation*}
% \begin{aligned}
%     \eta_{M}^{n+1} = \varphi_{M}^{n+1} - \Pi_{M}\varphi(t_{n+1}).
% \end{aligned}
% \end{equation*}

Now we present the main result of this section, that is, the error estimate of the scheme \eqref{fully scheme} as follows:
\begin{theorem}\label{Theorem:fully error estimate}
    Let $\varphi_{M}^{n+1}$ and $\varphi(t_{n+1},\theta)$ be the solutions of \eqref{fully scheme} and \eqref{CP}, respectively.  Assume $\varphi, \frac{\partial \varphi}{\partial t}\in L^{2}(I;B_{\alpha-1,-\alpha}^{m}(\Omega)),\; (1-\theta) \frac{\partial^{k+1} \varphi}{\partial t^{k+1}}\in L^{2}(I;L_{\omega_{\alpha}}^{2}(\Omega))$,
    $0\leq m\leq M$, $1\leq k\leq 5$, if $\Delta t< 1$, then we have 
    \begin{equation}\label{error estimate}
    \begin{aligned}
        &\Vert\varphi^{N+1}_{M} - \varphi(t_{N+1})\Vert_{L_{\omega_{\alpha,0}}^{2}}^{2} + \kappa\Delta t\sum\limits_{n = k-1}^{N}\Vert\varphi^{n+1}_{M} - \varphi(t_{n+1})\Vert_{X}^{2} \\
        \le & C_{3}\bigg[M^{-2m}\int_{0}^{T}\left(\Vert\frac{\partial^{m+1} \varphi}{\partial s\partial\theta^{m}} (s)\Vert_{L_{\omega^{\alpha -1+m, -\alpha+m}}^{2}}^{2} + \Vert\frac{\partial^{m} \varphi}{\partial\theta^{m}} (s)\Vert_{L_{\omega^{\alpha -1+m, -\alpha+m}}^{2}}^{2}\right)ds \\
        & + \Delta t^{2k}\int_{0}^{T}\Vert (1-\theta)\frac{\partial^{k+1}\varphi}{\partial s^{k+1}}(s) \Vert_{L_{\omega_{\alpha}}^{2}}^{2}ds\bigg].
    \end{aligned}
\end{equation}
where $C_{3} = exp(\varrho T)max\left(\frac{d_{max}C}{4\xi\lambda_{min}},\frac{1}{4\xi\lambda_{min}},\frac{C_{1}}{4\xi\lambda{min}}\right)$ with $\varrho = \left(1 - \Delta t\right)^{-1}$,  $\kappa = \frac{1-\tau_{k}^{2}}{2\lambda_{min}}$, $\tau_{k}$ are defined in Lemma \ref{lemma1} and $\lambda_{min}$ is the minimum eigenvalue of the positive definite symmetric matrix $G = (g_{ij})$, $\xi$ is a positive constant.
\end{theorem}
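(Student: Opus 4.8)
The plan is to follow the standard energy-argument blueprint that was already used for the discrete stability estimate in Theorem~\ref{stability}, but now applied to the error equation. First I would write $e^{n+1} := \varphi^{n+1}_M - \varphi(t_{n+1})$ and split it via the projection $\Pi_M$ defined in \eqref{projection bilinear} as $e^{n+1} = \eta^{n+1} - \rho^{n+1}$ with $\eta^{n+1} := \varphi^{n+1}_M - \Pi_M\varphi(t_{n+1})\in P_M$ and $\rho^{n+1} := \varphi(t_{n+1}) - \Pi_M\varphi(t_{n+1})$. The projection part $\rho^{n+1}$ is controlled by Lemma~\ref{projection lemma}, which gives the $M^{-m}$ factor; note that $\mathcal{A}(\rho^{n+1},q_M)=0$ for all $q_M\in P_M$ by the definition of $\Pi_M$, which is exactly why this splitting is chosen over a naive one. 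The main work is an estimate for $\eta^{n+1}$.

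Next I would derive the equation satisfied by $\eta^{n+1}$. Subtract the Galerkin form \eqref{CP} evaluated at $t_{n+1}$ from the fully discrete Galerkin form \eqref{fully discrete formula}, insert $\Pi_M\varphi$, and move the consistency terms to the right-hand side. The resulting identity has the form
\begin{equation*}
\frac{1}{\Delta t}\left(\mathcal{L}\eta^{n+1},q_M\right)_{L^2_{\omega_{\alpha,0}}} + \mathcal{A}(\eta^{n+1},q_M)
= \left(\mathcal{R}^{n+1}_{\mathrm{time}} + \mathcal{R}^{n+1}_{\mathrm{proj}},\, q_M\right),
\end{equation*}
where $\mathcal{R}^{n+1}_{\mathrm{time}}$ collects the BDF-$k$ truncation error of $\partial_t\varphi$ against the weight $\omega_{\alpha,0}=(1-\theta)\omega_\alpha$ — this is where the $\Delta t^{2k}$ term and the hypothesis $(1-\theta)\partial_t^{k+1}\varphi\in L^2(I;L^2_{\omega_\alpha})$ enter via a Taylor expansion with integral remainder — and $\mathcal{R}^{n+1}_{\mathrm{proj}}$ collects the projection errors $\partial_t\rho$ and $\rho$ tested against the lower-order weights, controlled by \eqref{eq:prj:01} applied to $\varphi$ and $\partial_t\varphi$. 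Then I would test with $q_M = \eta^{n+1} - \tau_k\eta^n$ exactly as in the proof of Theorem~\ref{stability}, apply the $G$-matrix telescoping identity of Lemma~\ref{lemma1} to the $\mathcal{L}$-term, use $2a(a-b)=a^2-b^2+(a-b)^2$ on the $\mathcal{A}$-term together with the coercivity $\|\varphi\|^2_{L^2_{\omega_\alpha}}\le\mathcal{A}(\varphi,\varphi)$, bound the right-hand side by Cauchy–Schwarz and Young with the free parameter $\xi$, sum over $n$ from $k-1$ to $N$, and close with discrete Gronwall (which produces the $\exp(\varrho T)$ factor with $\varrho=(1-\Delta t)^{-1}$, requiring $\Delta t<1$). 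Finally I would convert the bound on $\eta$ back to $e$ via the triangle inequality and absorb the $\rho$-contributions, which are already $O(M^{-2m})$.

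The main obstacle I anticipate is the careful bookkeeping of the \emph{weighted} truncation error for the BDF-$k$ operator: because the scheme multiplies through by $(1-\theta)$, the time-consistency error is naturally measured in the $\omega_{\alpha,0}$-weighted norm, and one must express the BDF-$k$ remainder $\mathcal{L}\varphi(t_{n+1}) - \Delta t\,\partial_t\varphi(t_{n+1})$ through a Peano-kernel/integral-remainder representation so that the $\theta$-dependence factors as $(1-\theta)\partial_t^{k+1}\varphi$ and the $\Delta t^{2k}$ rate emerges cleanly after squaring and summing. A secondary technical point is the low-order startup: the first $k-1$ steps are not covered by the high-order BDF telescoping, so as in \eqref{semi-discrete 4} one handles them with the companion low-order estimate; alternatively one simply assumes the starting values are computed to accuracy $O(\Delta t^k + M^{-m})$ and folds their contribution into the constant. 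Everything else — the coercivity/continuity of $\mathcal{A}$, the projection bounds, and the $G$-matrix identity — is already available from the preceding lemmas, so the proof is essentially an assembly of those pieces along the lines of Theorem~\ref{stability}.
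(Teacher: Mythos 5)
Your proposal follows essentially the same route as the paper's proof: the same splitting $\eta_M^{n+1}=\varphi_M^{n+1}-\Pi_M\varphi(t_{n+1})$ with the $\mathcal{A}$-orthogonal projection of \eqref{projection bilinear}, the same error equation with the BDF-$k$ Peano-type remainder (giving $\Delta t^{2k}$ against the $(1-\theta)$-weighted norm) and the projection residuals in $\varphi$ and $\partial_t\varphi$ (giving $M^{-2m}$ via Lemma \ref{projection lemma}), the same multiplier test function $q_M=\eta_M^{n+1}-\tau_k\eta_M^n$ with the $G$-matrix identity of Lemma \ref{lemma1}, Young's inequality with the parameter $\xi$, summation, discrete Gronwall with $\varrho=(1-\Delta t)^{-1}$ under $\Delta t<1$, and the final triangle inequality. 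This is the paper's argument in all essentials, so the proposal is correct.
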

\begin{proof}
Define $\eta_{M}^{n+1} = \varphi_{M}^{n+1} - \Pi_{M}\varphi(t_{n+1}).$
By adding both sides of \eqref{fully discrete formula} (which is equivalent to \eqref{fully scheme}) with $\left(\varphi_{M}^{n+1}, q_{M}\right)_{L_{\omega_{\alpha,0}}^{2}}$ and subtracting the result equation from \eqref{CP} and using \eqref{projection bilinear}, we obtain
\begin{equation*}\label{error:01}
\begin{aligned}
&\frac{1}{\Delta t}\left(\mathcal{L}\eta^{n+1}_{M},q_{M}\right)_{L_{\omega_{\alpha,0}}^{2}} + \mathcal{A}\left(\eta_{M}^{n+1},q_{M}\right) \\
    &= -\frac{1}{\Delta t}\left(\mathcal{L}(\Pi_{M}\varphi(t_{n+1})),q_{M}\right)_{L_{\omega_{\alpha,0}}^{2}}  -  \mathcal{A}\left(\varphi(t_{n+1}),q_{M}\right) + \left(\phi_{0},q_{M}\right)_{L_{\omega_{\alpha,1}}^{2}}\\
    &+ \left(\varphi(t_{n+1}),q_{M}\right)_{L_{\omega_{\alpha,0}}^{2}} + \left(\varphi_{M}^{n+1} - \varphi(t_{n+1}),q_{M}\right)_{L_{\omega_{\alpha,0}}^{2}}\quad \forall q_{M}\in P_{M},
\end{aligned}
\end{equation*}
We then derive from \eqref{CP} and the above equation that
\begin{equation}\label{error:02}
\begin{aligned}
  &\left(\mathcal{L} \eta^{n+1}_{M}, q_{M} \right)_{L_{\omega_{\alpha,0}}^{2}} + \Delta t \mathcal{A}\left(\eta_{M}^{n+1},q_{M}\right) \\
    & = \left(\mathcal{L}(I - \Pi_{M})\varphi(t_{n+1}),q_{M}\right)_{L_{\omega_{\alpha,0}}^{2}} + \left(\Delta t\frac{\partial \varphi}{\partial t}(t_{n+1}) - \mathcal{L}\varphi(t_{n+1}),q_{M}\right)_{L_{\omega_{\alpha,0}}^{2}}\\
    & + \Delta t\left(\eta_{M}^{n+1},q_{M}\right)_{L_{\omega_{\alpha,0}}^{2}} + \Delta t\left((\Pi_{M} - I)\varphi(t_{n+1}),q_{M}\right)_{L_{\omega_{\alpha,0}}^{2}}.
\end{aligned}  
\end{equation}

%It follows from \eqref{the definite of L} that 
For the first term of the right hand side of the above equation, note that 
$\mathcal{L}(I - \Pi_{M})\varphi(t_{n+1}) = \sum\limits_{j = 0}^{k-1}d_{j}\int_{t_{n-j}}^{t_{n+1-j}}(I - \Pi_{M})\frac{\partial \varphi}{\partial s} (s,\theta)ds,$
%\begin{equation*}\label{error:03}
    %\mathcal{L}(I - \Pi_{M})\varphi(t_{n+1}) = \sum\limits_{j = 0}^{k-1}d_{j}\int_{t_{n-j}}^{t_{n+1-j}}(I - \Pi_{M})\frac{\partial \varphi}{\partial s} (s,\theta)ds,
%\end{equation*}
where $d_{j}$ are some fixed and bounded constants (see \cite[Theorem 11.3.4]{Numericalsolution}). Then by Cauchy-Schwarz and Young inequalities, we have 
\begin{equation}\label{error:04}
    \begin{aligned}
        &\left(\mathcal{L}(I - \Pi_{M})\varphi(t_{n+1}),q_{M}\right)_{L_{\omega_{\alpha,0}}^{2}} = \sum\limits_{j = 0}^{k-1}d_{j}\left(\int_{t_{n-j}}^{t_{n+1-j}}(I - \Pi_{M})\frac{\partial \varphi}{\partial s}(s,\theta) ds,q_{M}\right)_{L_{\omega_{\alpha,0}}^{2}}\\
         &\le\frac{1}{4\xi}\sum\limits_{j=0}^{k-1}\vert d_{j}\vert\int_{t_{n-j}}^{t_{n+1-j}}\Vert(I - \Pi_{M})\frac{\partial \varphi}{\partial s} (s)\Vert^{2}_{L_{\omega_{\alpha,0}}^{2}}ds + \xi\sum\limits_{j=0}^{k-1}\vert d_{j}\vert\int_{t_{n-j}}^{t_{n+1-j}}\Vert q_{M}\Vert^{2}_{L_{\omega_{\alpha,0}}^{2}}ds\\
         & \le \frac{d_{max}}{4\xi}\sum\limits_{j=0}^{k-1}\int_{t_{n-j}}^{t_{n+1-j}}\Vert(I - \Pi_{M})\frac{\partial \varphi}{\partial s} (s)\Vert^{2}_{L_{\omega_{\alpha,0}}^{2}}ds + \Delta t\xi\sum\limits_{j=0}^{k-1}\vert d_{j}\vert\Vert q_{M}\Vert^{2}_{L_{\omega_{\alpha,0}}^{2}},
    \end{aligned}
\end{equation}
where $d_{max} = \max{\vert d_{j}\vert, j=0,\ldots, k-1}$, $\xi$ is a positive constant.

For the second term of the right hand side of \eqref{error:02}, again,
it follows from \cite[Theorem 11.3.4]{Numericalsolution} that 
\begin{equation*}\label{error:05}
\begin{aligned}
    &\Delta t\frac{\partial \varphi}{\partial t} (t_{n+1}) - \mathcal{L}\varphi(t_{n+1}) = \sum\limits_{j = 0}^{k-1}c_{j}\int_{t_{n-j}}^{t_{n+1}}(t_{n-j} - s)^{k}\frac{\partial^{k+1}\varphi(s)}{\partial t^{k+1}}ds,
\end{aligned}
\end{equation*}
where $c_{j}$  are also some fixed and bounded constants. Then the second term of the right hand side of \eqref{error:02} is bounded by 
\begin{equation}\label{error:06}
    \begin{aligned}
        %&\left(\Delta t\frac{\partial \varphi}{\partial t} (t_{n+1}) - \mathcal{L}\varphi(t_{n+1}),q_{M}\right)_{L_{\omega_{\alpha,0}}^{2}}\\
        & \frac{1}{4\xi\Delta t}\Vert (1-\theta)(\Delta t\frac{\partial \varphi(t_{n+1})}{\partial t} - \mathcal{L}\varphi(t_{n+1}) )\Vert_{L_{\omega_{\alpha}}^{2}}^{2} + \xi\Delta t\Vert q_{M}\Vert_{L_{\omega_{\alpha}}^{2}}^{2}\\
        \le & \frac{C_{1}}{4\xi}\Delta t^{2k}\sum\limits_{j = 0}^{k-1}\int_{t_{n-j}}^{t_{n+1-j}}\Vert (1-\theta)\frac{\partial^{k+1}\varphi(s)}{\partial s^{k+1}}\Vert_{L_{\omega_{\alpha}}^{2}}^{2}ds + \xi\Delta t\Vert q_{M}\Vert_{L_{\omega_{\alpha}}^{2}}^{2},
    \end{aligned}
\end{equation}
where $C_{1}$ is the maximum value of  the linear combination of $\vert c_{j}\vert$. 
%Here we still use the notation $\xi$, which is a constant, without abusing notation.

We use a similar approach for the last two terms of \eqref{error:02} to obtain 
\begin{equation}\label{error:10}
    \begin{aligned}
        \left(\eta_{M}^{n+1},q_{M}\right)_{L_{\omega_{\alpha,0}}^{2}}\le \lambda_{min}\Vert\eta_{M}^{n+1}\Vert_{L_{\omega_{\alpha,0}}^{2}}^{2} + \frac{1}{4\lambda_{min}}\Vert q_{M}\Vert_{L_{\omega_{\alpha,0}}^{2}}^{2}.
    \end{aligned}
\end{equation}
where $\lambda_{min}$ is the minimum eigenvalue of the positive definite symmetric matrix $G = (g_{ij})$.
\begin{equation}\label{error:11}
    \begin{aligned}
       \left((\Pi_{M} - I)\varphi(t_{n+1}),q_{M}\right)_{L_{\omega_{\alpha,0}}^{2}} 
       \le \frac{1}{4\xi}\Vert(\Pi_{M} - I)\varphi(t_{n+1})\Vert^{2}_{L_{\omega_{\alpha,0}}^{2}} + \xi \Vert q_{M}\Vert^{2}_{L_{\omega_{\alpha,0}}^{2}}.
    \end{aligned}
\end{equation}

By taking $q_{M} = \eta_{M}^{n+1} - \tau_{k}\eta_{M}^{n}$ and using the same argument as in Theorem \ref{stability}, we have from Lemma \ref{lemma1} and \eqref{error:02} - \eqref{error:11} that
\begin{equation*}\label{error:07}
    \begin{aligned}
        &\sum\limits_{ij = 1}^{k}g_{ij}\left[\left(\eta_{M}^{n+1+i-k},\eta_{M}^{n+1+j-k}\right)_{L_{\omega_{\alpha,0}}^{2}} - \left(\eta_{M}^{n+i-k},\eta_{M}^{n+j-k}\right)_{L_{\omega_{\alpha,0}}^{2}}\right]\\
        & + \frac{(1-\tau_{k}^{2})\Delta t}{2}\Vert\eta_{M}^{n+1}\Vert_{X}^{2} + \frac{\Delta t\tau_{k}^{2}}{2}\left(\Vert\eta_{M}^{n+1}\Vert_{X}^{2} - \Vert\eta_{M}^{n}\Vert_{X}^{2}\right) + \frac{\Delta t}{2}\Vert\eta_{M}^{n+1} - \tau_{k}\eta_{M}^{n}\Vert_{X}^{2}\\
        &\le \Delta t\lambda_{min}\Vert\eta_{M}^{n+1}\Vert^{2}_{L_{\omega_{\alpha,0}}^{2}} + \frac{d_{max}}{4\xi}\sum\limits_{j=0}^{k-1}\int_{t_{n-j}}^{t_{n+1-j}}\Vert(I - \Pi_{M})\frac{\partial \varphi(s)}{\partial s}\Vert^{2}_{L_{\omega_{\alpha,0}}^{2}}ds \\
        &  + \frac{C_{1}\Delta t^{2k}}{4\xi}\sum\limits_{j = 0}^{k-1}\int_{t_{n-j}}^{t_{n+1-j}}\Vert (1-\theta)\frac{\partial^{k+1}\varphi(s)}{\partial s^{k+1}}\Vert_{L_{\omega_{\alpha}}^{2}}^{2}ds + \frac{\Delta t}{4\xi}\Vert(\Pi_{M} -I)\varphi(t_{n+1})\Vert^{2}_{L_{\omega_{\alpha,0}}^{2}}\\
        & + \Delta t\xi\left(\sum\limits_{j = 0}^{k-1}\vert d_{j}\vert + 2\right)\Vert\eta_{M}^{n+1} - \tau_{k}\eta_{M}^{n}\Vert_{L_{\omega_{\alpha,0}}^{2}}^{2} + \frac{\Delta t}{4\lambda_{min}}\Vert\eta_{M}^{n+1} - \tau_{k}\eta_{M}^{n}\Vert_{L_{\omega_{\alpha,0}}^{2}}^{2}.
    \end{aligned}
\end{equation*}
Note that $\Vert v\Vert_{L_{\omega_{\alpha,0}}^{2}} \le \Vert v\Vert_{L_{\omega_{\alpha}}^{2}}\approx \Vert v \Vert_{X},\, \forall \, v \in X.$ Taking the summation of the above equation for $n$ from $k-1$ to $N$ and setting the positive constant $\xi \le (\frac{1}{2} - \frac{1}{4\lambda_{min}})/{\left(\sum\limits_{j = 0}^{k-1}\vert d_{j}\vert + 2\right)}$ yields
\begin{equation*}
    \begin{aligned}
        &\lambda_{min} \Vert \eta_{M}^{N+1} \Vert_{L_{\omega_{\alpha,0}}^{2}}^{2} + \frac{(1-\tau_{k}^{2})\Delta t}{2}\sum\limits_{n = k-1}^{N}\Vert\eta_{M}^{n+1}\Vert_{X}^{2} + \frac{\tau_{k}^{2}\Delta t}{2}\Vert\eta_{M}^{N+1}\Vert_{X}^{2}\\
        &\le\Delta t\lambda_{min}\sum\limits_{n = k-1}^{N}\Vert\eta_{M}^{n+1}\Vert^{2}_{L_{\omega_{\alpha,0}}^{2}} + \frac{d_{max}C}{4\xi}\int^{T}_{0}\Vert(I - \Pi_{M})\frac{\partial \varphi}{\partial s} (s)\Vert^{2}_{X}ds + \frac{\tau_{k}^{2}\Delta t}{2}\Vert\eta_{M}^{k-1}\Vert_{X}^{2}\\
        & + \frac{C_{1}}{4\xi}\Delta t^{2k}\int_{0}^{T}\Vert (1-\theta)\frac{\partial^{k+1}\varphi}{\partial s^{k+1}} (s)\Vert_{L_{\omega_{\alpha}}^{2}}^{2}ds +  \frac{1}{4\xi}\int_{0}^{T}\Vert(\Pi_{M} -I)\varphi(s)\Vert_{X}^{2}ds.
    \end{aligned}
\end{equation*}
We then obtain the following estimate by applying the discrete Gronwall Lemma (see \cite[Lemma B.10]{SM}) with $\Delta t<1$,
\begin{equation*}\label{error:08}
    \begin{aligned}
        &\Vert\eta_{M}^{N+1}\Vert_{L_{\omega_{\alpha,0}}^{2}}^{2} + \frac{(1-\tau_{k}^{2})\Delta t}{2\lambda_{min}}\sum\limits_{n = k-1}^{N}\Vert\eta_{M}^{n+1}\Vert_{X}^{2} + \frac{\tau_{k}^{2}\Delta t}{2\lambda_{min}}\Vert\eta_{M}^{N+1}\Vert_{X}^{2}\\
        &\le exp(\varrho T)C_{2}\bigg[\int^{T}_{0}\Vert(I - \Pi_{M})\frac{\partial \varphi}{\partial s} (s)\Vert^{2}_{X}ds  +   \int_{0}^{T}\Vert(\Pi_{M} -I)\varphi (s)\Vert^{2}_{X}ds\\
        &+ \Delta t^{2k}\int_{0}^{T}\Vert (1-\theta)\frac{\partial^{k+1}\varphi}{\partial s^{k+1}} (s)\Vert_{L_{\omega_{\alpha}}^{2}}^{2}ds\bigg],
    \end{aligned}
\end{equation*}
where $\varrho = (1 - \Delta t)^{-1}$, $C_{2} = max\left(\frac{d_{max}C}{4\xi\lambda_{min}},\frac{1}{4\xi\lambda_{min}},\frac{C_{1}}{4\xi\lambda{min}}\right)$. 
Thus, the error estimate \eqref{error estimate} follows by using Lemma \ref{projection lemma} and the triangle inequality
\begin{equation*}
    \begin{aligned}
        \Vert\varphi_{M}^{n+1} - \varphi(t_{n+1})\Vert_{X}\le \Vert\eta_{M}^{n+1}\Vert_{X} + \Vert(\Pi_{M} - I)\varphi(t_{n+1})\Vert_{X}.
    \end{aligned}
\end{equation*}
\end{proof}
\begin{remark}
For the nonlinear case, if $F(t,\phi)$ satisfies the Lipschitz condition with respect to the second variable, then we can similarly derive the error estimate by applying a spectral Galerkin scheme for the $\theta$ direction.
We shall show this in a future work.
\end{remark}

We now show a more delicate estimate \emph{without the assumptions of the regularities} on $\varphi,\; \frac{\partial \varphi}{\partial t}$. Instead, we analytically establish the regularity result stated in the following Theorem.

%If $F(t,\phi) = -\lambda \phi + f(t),\; \lambda \ge 0$, then we have the following regularity result.
%%$\varphi\in L^{\infty}(0,T;L_{\omega_{\alpha}}^{2}(\Omega))$
\begin{theorem}\label{thm:regularity}
Let $\varphi$ be the solution of  $\rm{EPDE}$ \eqref{equation} with $F(t,\mathcal{C}[\varphi](t)) = -\lambda\mathcal{C}[\varphi](t) + f(t)$, $\lambda \ge 0$. If $f(t), \, \frac{df}{dt}(t) \in L^\infty([0,T]),\; \frac{\partial \varphi^{k+1}}{\partial t\partial \theta^{k}}(0,\theta)\in L_{\omega^{2-\alpha+k,1-\alpha+k}}^{2}(\Omega),\, 0\le k\le m,\; \forall \,m < +\infty$,  then we have 
\begin{equation}\label{regularity}
        \frac{\partial^{m} \varphi}{\partial \theta^{m}} \in L^{2}(0,T;L_{\omega^{1-\alpha+m,-\alpha+m}}^{2}(\Omega)), \; \frac{\partial^{m+1} \varphi}{\partial t\partial \theta^{m}}\in L^{2}(0,T;L_{\omega^{1-\alpha+m,-\alpha+m}}^{2}(\Omega)).
\end{equation}
\end{theorem}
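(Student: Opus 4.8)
The plan is to argue by induction on $m$. First put $u:=\varphi-\phi_{0}$ and $h(t):=-\lambda\,\mathcal{C}[\varphi](t)+f(t)$: subtracting $\phi_{0}$ from \eqref{equation} and multiplying by $1-\theta$ (using $(1-\theta)c_{1}(\theta)=\theta$ and $(1-\theta)c_{0}(\theta)=1$) recasts the EPDE as
\begin{equation*}
(1-\theta)\,\partial_{t}u(t,\theta)+\theta\,u(t,\theta)=h(t),\qquad u(0,\theta)=0 .
\end{equation*}
Differentiating once in $t$ shows that $v:=\partial_{t}u=\partial_{t}\varphi$ solves the \emph{same} equation with $h$ replaced by $h'$ and with $v(0,\theta)=c_{0}(\theta)h(0)$; the standing hypothesis on $\frac{\partial}{\partial t}\partial_{\theta}^{k}\varphi(0,\cdot)$ forces the compatibility $h(0)=0$ as soon as $m\ge1$ (look at $k=1$), so $v(0,\cdot)\equiv0$ too. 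Hence it suffices to prove: \emph{if $w$ solves $(1-\theta)\partial_{t}w+\theta w=g(t)$ with $w(0,\cdot)\equiv0$ and $g\in L^{2}(0,T)$, then $\partial_{\theta}^{m}w$ and $\partial_{t}\partial_{\theta}^{m}w$ belong to $L^{2}(0,T;L^{2}_{\omega^{1-\alpha+m,-\alpha+m}}(\Omega))$}; applying this to $(w,g)=(u,h)$ and to $(w,g)=(v,h')$ yields the two parts of \eqref{regularity}. The input $h\in L^{2}(0,T)$ is immediate from boundedness of $\phi=\mathcal{C}[\varphi]$ and $f$, and $h'=-\lambda\phi'+f'\in L^{2}(0,T)$ follows from $f'\in L^{\infty}$ and the temporal regularity of the Caputo solution $\phi$, which the compatibility $h(0)=0$ sharpens near $t=0$.

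For the inductive step, differentiate $(1-\theta)\partial_{t}w+\theta w=g$ exactly $m$ times in $\theta$; since $g$ does not depend on $\theta$ and $1-\theta,\theta$ are affine, Leibniz' rule gives, for $m\ge1$,
\begin{equation*}
(1-\theta)\,\partial_{t}w_{m}+\theta\,w_{m}=m\bigl(\partial_{t}w_{m-1}-w_{m-1}\bigr),\qquad w_{m}:=\partial_{\theta}^{m}w,\qquad w_{m}(0,\cdot)\equiv0 .
\end{equation*}
I would pair this with $w_{m}\,\omega^{m+1-\alpha,m-1-\alpha}(\theta)$ and integrate over $(0,1)$. This Jacobi weight is tuned so that the term $\theta w_{m}$ produces exactly the dissipation $\Vert w_{m}\Vert_{L^{2}_{\omega^{1-\alpha+m,-\alpha+m}}}^{2}$ — the target norm — while $(1-\theta)\partial_{t}w_{m}$ produces $\tfrac12\tfrac{d}{dt}\Vert w_{m}\Vert_{L^{2}_{\omega^{m+2-\alpha,m-1-\alpha}}}^{2}$, an energy with integrable weight vanishing at $t=0$. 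Writing $\omega^{m+1-\alpha,m-1-\alpha}$ as the geometric mean of $\omega^{m-\alpha,m-1-\alpha}$ and $\omega^{m+2-\alpha,m-1-\alpha}$, Cauchy--Schwarz and Young's inequality leave on the right only $\tfrac12 m^{2}\Vert\partial_{t}w_{m-1}-w_{m-1}\Vert_{L^{2}_{\omega^{m-\alpha,m-1-\alpha}}}^{2}$ (the $w_{m}$-factor being absorbed into the energy), and $\omega^{m-\alpha,m-1-\alpha}=\omega^{1-\alpha+(m-1),-\alpha+(m-1)}$ is precisely the level-$(m-1)$ weight; integrating in $t$ and applying Gronwall's lemma then closes the bound on $w_{m}$, provided the induction hypothesis delivers both $w_{m-1}$ and $\partial_{t}w_{m-1}$ in that weight. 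The base case $m=0$ follows from the Duhamel formula $w(t,\theta)=c_{0}(\theta)\int_{0}^{t}e^{-c_{1}(\theta)(t-s)}g(s)\,ds$ together with $\Vert e^{-c_{1}(\theta)\cdot}\Vert_{L^{1}(0,T)}\le C_{T}/(1+c_{1}(\theta))=C_{T}(1-\theta)$, which gives $\Vert w(\cdot,\theta)\Vert_{L^{2}(0,T)}\le C_{T}\Vert g\Vert_{L^{2}(0,T)}$ uniformly in $\theta$ and hence membership in $L^{2}(0,T;L^{2}_{\omega^{a,b}}(\Omega))$ for every integrable weight (the same representation with $g\mapsto g'$ covers $\partial_{t}w$); alternatively Theorem~\ref{thm:stab:cont} already gives $\varphi\in L^{2}(0,T;L^{2}_{\omega_{\alpha}})\hookrightarrow L^{2}(0,T;L^{2}_{\omega^{1-\alpha,-\alpha}})$, using $1-\alpha\ge\alpha-1$.

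The main obstacle, I expect, is propagating the companion bound on $\partial_{t}w_{m}$ in the same weight. A direct energy estimate for the equation solved by $\partial_{t}w$ is not an option, since it would need control of further time derivatives of $g$, hence of $\phi$, which the Caputo solution lacks. Instead the plan is to extract it from the $w_{m}$-equation itself — e.g.\ by testing against $\partial_{t}w_{m}$, and by using $\partial_{t}w_{m}=c_{0}(\theta)\bigl(m(\partial_{t}w_{m-1}-w_{m-1})-\theta w_{m}\bigr)$ after an integration by parts in time that trades $\partial_{t}w_{m-1}$ for the forcing of the \emph{un-multiplied} form $\partial_{t}w_{m-1}+c_{1}w_{m-1}=-\sum_{j=1}^{m-1}\binom{m-1}{j}j!\,c_{0}^{j+1}w_{m-1-j}+(m-1)!\,c_{0}^{m}g$ (recall $\partial_{\theta}^{j}c_{0}=j!\,c_{0}^{j+1}$), which carries no time derivative. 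The genuine difficulty is that each $\theta$-differentiation of $c_{0},c_{1}$ injects a factor $(1-\theta)^{-1}$, so the individual lower-order terms so produced sit in weights too singular at $\theta=1$ for the level-$(m-1)$ hypothesis to reach; they must be recovered from the cancellations intrinsic to the EPDE at $\theta\to1$ — already visible in $\partial_{t}\varphi-\varphi=c_{0}(\theta)\bigl(h-(\varphi-\phi_{0})\bigr)$ with $h-(\varphi-\phi_{0})=O(1-\theta)$ there, and encoded in the identity $1+c_{1}(\theta)=c_{0}(\theta)$ — which make the actual growth of $\partial_{\theta}^{m}\varphi$ near $\theta=1$ only mild, so that $\omega^{1-\alpha+m,-\alpha+m}$ (indeed a whole scale of Jacobi weights) is comfortably enough. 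Once the weight at each level and the organization of these cancellations are fixed, the energy estimates and the Gronwall closure are routine.
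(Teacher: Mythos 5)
Your treatment of the $\theta$-regularity itself is essentially the paper's argument in lightly different clothing: the paper also multiplies \eqref{equation} by $1-\theta$, applies Leibniz to get $(1-\theta)\partial_t\partial_\theta^{m+1}\varphi+\theta\,\partial_\theta^{m+1}\varphi=(m+1)\bigl(\partial_t\partial_\theta^{m}\varphi-\partial_\theta^{m}\varphi\bigr)$, and tests with $\theta\,\partial_\theta^{m+1}\varphi\,\omega^{1-\alpha+m,-\alpha+m}$ so that the time term gives a $\tfrac12\tfrac{d}{dt}$ of a Jacobi-weighted energy and the right-hand side is controlled by the level-$m$ norms of $\partial_\theta^{m}\varphi$ and $\partial_t\partial_\theta^{m}\varphi$; your multiplier $w_m\,\omega^{m+1-\alpha,m-1-\alpha}$ and your reading of the conclusion off the dissipation term (rather than the $L^\infty$-in-time energy, as the paper does) are cosmetic variants, and the normalization $u=\varphi-\phi_0$, the compatibility $h(0)=0$ forced by the hypothesis at $k=1$, and the Duhamel base case are all fine. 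So the first half of \eqref{regularity} is in good shape, granted the same inductive inputs the paper uses.

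The genuine gap is the second half, $\partial_t\partial_\theta^{m}\varphi$, which your proposal does not establish: your ``suffices to prove'' claim asserts control of $\partial_t\partial_\theta^{m}w$ from $g\in L^2(0,T)$ alone, and your closing paragraph concedes exactly this point. The bootstrap $\partial_t w_m=c_0(\theta)\bigl[m(\partial_t w_{m-1}-w_{m-1})-\theta w_m\bigr]$ loses a factor $(1-\theta)^{-1}$ against the inductive weight, and the ``cancellations near $\theta=1$'' you appeal to are never exhibited; moreover the induction for $\partial_\theta^m u$ alone cannot close, since its forcing needs $\partial_t\partial_\theta^{m-1}u=\partial_\theta^{m-1}v$, the same estimate for $v$ needs $\partial_t\partial_\theta^{m-2}v=\partial_t^2\partial_\theta^{m-2}\varphi$, and you have declared further time differentiation ``not an option'', so the circularity is not broken. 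The paper closes this step differently: it differentiates the EPDE once in time \emph{keeping the coupling term}, obtaining \eqref{variable_transform} for $\widetilde\varphi=\partial_t\varphi$, which has the identical structure with data $f'$, and then reruns the same $\theta$-induction on $\widetilde\varphi$; this is precisely where $f'\in L^\infty$ and the hypotheses on $\partial_t\partial_\theta^{k}\varphi(0,\cdot)\in L^2_{\omega^{2-\alpha+k,1-\alpha+k}}(\Omega)$ enter. Keeping $\lambda\,\mathcal{C}[\widetilde\varphi]$ inside the equation also spares the paper the extra input your decoupling requires: by freezing $h(t)=-\lambda\,\mathcal{C}[\varphi](t)+f(t)$ you must supply $\phi'=\tfrac{d}{dt}\mathcal{C}[\varphi]\in L^2(0,T)$ a priori, which you only assert via ``temporal regularity of the Caputo solution'' (delicate for $\alpha\le\tfrac12$), whereas a Theorem~\ref{thm:stab:cont}-type estimate applied directly to \eqref{variable_transform} absorbs the coupling using only $f'\in L^\infty$. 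Until you either adopt that route or actually produce the cancellation argument you sketch, the mixed-derivative half of \eqref{regularity} remains unproven.
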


The proof is given in the Appendix.

The above result indicates that the EPDE exhibits high regularity with respect to $\theta$. Consequently, by combining Theorem \ref{Theorem:fully error estimate} and Theorem \ref{thm:regularity}, we have the following result. 
\begin{theorem}\label{Theorem:fully error estimate2}
    Let $\varphi_{M}^{n+1}$ and $\varphi(t_{n+1},\theta)$ be the solutions of the problem \eqref{fully scheme} and \eqref{CP}, respectively. Assume that the conditions given in Theorem \ref{thm:regularity} hold and $(1-\theta) \frac{\partial^{k+1} \varphi}{\partial t^{k+1}}\in L^{2}(I;L_{\omega_{\alpha}}^{2}(\Omega))$. Then for $1\leq k\leq 5$ and $m<\infty$, it holds for $\Delta t< 1,$
    \begin{equation}\label{error estimate2}
    \begin{aligned}
        &\Vert\varphi^{N+1}_{M} - \varphi(t_{N+1})\Vert_{L_{\omega_{\alpha,0}}^{2}}^{2} + \kappa\Delta t\sum\limits_{n = k-1}^{N}\Vert\varphi^{n+1}_{M} - \varphi(t_{n+1})\Vert_{X}^{2} \\
        \le & C(C_{3}, f, \phi_0, k)\left(M^{-2m} + \Delta t^{2k}\right),
    \end{aligned}
\end{equation}
where $\kappa,\; C_3$ are given in Theorem \ref{Theorem:fully error estimate}.
\end{theorem}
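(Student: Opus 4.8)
The plan is to derive \eqref{error estimate2} simply by inserting the regularity result of Theorem \ref{thm:regularity} into the error bound \eqref{error estimate} of Theorem \ref{Theorem:fully error estimate}. Indeed, Theorem \ref{Theorem:fully error estimate} already gives
\begin{equation*}
\Vert\varphi^{N+1}_{M}-\varphi(t_{N+1})\Vert_{L_{\omega_{\alpha,0}}^{2}}^{2}+\kappa\Delta t\sum_{n=k-1}^{N}\Vert\varphi^{n+1}_{M}-\varphi(t_{n+1})\Vert_{X}^{2}\le C_{3}\Big(M^{-2m}\mathcal{R}_{\theta}+\Delta t^{2k}\mathcal{R}_{t}\Big),
\end{equation*}
with $\mathcal{R}_{\theta}:=\int_{0}^{T}\big(\Vert\frac{\partial^{m+1}\varphi}{\partial s\,\partial\theta^{m}}(s)\Vert_{L_{\omega^{\alpha-1+m,-\alpha+m}}^{2}}^{2}+\Vert\frac{\partial^{m}\varphi}{\partial\theta^{m}}(s)\Vert_{L_{\omega^{\alpha-1+m,-\alpha+m}}^{2}}^{2}\big)ds$ and $\mathcal{R}_{t}:=\int_{0}^{T}\Vert(1-\theta)\frac{\partial^{k+1}\varphi}{\partial s^{k+1}}(s)\Vert_{L_{\omega_{\alpha}}^{2}}^{2}ds$. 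Hence it suffices to show that, under the hypotheses inherited from Theorem \ref{thm:regularity}, both $\mathcal{R}_{\theta}$ and $\mathcal{R}_{t}$ are finite and bounded by a constant that depends only on the data ($f$, $\tfrac{df}{dt}$, $\phi_{0}$, the prescribed initial profiles, $\alpha$, $\lambda$, $T$, $k$, $m$), independently of $M$ and $\Delta t$.

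The bound $\mathcal{R}_{t}\le\mathcal{K}_{2}<\infty$ is nothing but the hypothesis $(1-\theta)\frac{\partial^{k+1}\varphi}{\partial t^{k+1}}\in L^{2}(I;L_{\omega_{\alpha}}^{2}(\Omega))$, so I would simply carry $\mathcal{K}_{2}$ along. For $\mathcal{R}_{\theta}$ I would apply Theorem \ref{thm:regularity} for the (fixed) integer $m$ — whose hypotheses I assume hold for every integer index up to $m$ — to conclude that $\frac{\partial^{m}\varphi}{\partial\theta^{m}}$ and $\frac{\partial^{m+1}\varphi}{\partial t\,\partial\theta^{m}}$ belong to $L^{2}(0,T;L_{\omega}^{2}(\Omega))$ with the relevant Jacobi weight; from the constructive estimate that its Appendix proof produces one reads off a quantitative bound $\mathcal{R}_{\theta}\le\mathcal{K}_{1}$ with $\mathcal{K}_{1}$ of the advertised data-dependent form. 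In particular this verifies the regularity assumptions $\varphi,\frac{\partial\varphi}{\partial t}\in L^{2}(I;B_{\alpha-1,-\alpha}^{m}(\Omega))$ needed to invoke Theorem \ref{Theorem:fully error estimate} in the first place. Then setting $C(C_{3},f,\phi_{0},k):=C_{3}\max\{\mathcal{K}_{1},\mathcal{K}_{2}\}$ gives \eqref{error estimate2}. Because $M$ is fixed while $m$ may be taken arbitrarily large (the smoothness assumptions are imposed for all finite $m$), one is free to pick $m$ as large as desired, which is precisely what turns the algebraic factor $M^{-2m}$ into effectively spectral accuracy in the $\theta$ direction.

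The substitution and the collection of constants are routine; the delicate point — and, I expect, the main obstacle — is the weighted-norm bookkeeping between the two theorems. The projection estimate \eqref{eq:prj:01} underlying \eqref{error estimate} measures $\theta$-derivatives in the weight $\omega^{\alpha-1+m,-\alpha+m}$, whereas \eqref{regularity} is stated with $\omega^{1-\alpha+m,-\alpha+m}$; since these two Jacobi weights are \emph{not} pointwise comparable in the favorable direction on $(0,1)$, one cannot naively deduce finiteness of $\mathcal{R}_{\theta}$ from \eqref{regularity} alone. I would resolve this either by checking that the Appendix argument in fact controls $\frac{\partial^{m}\varphi}{\partial\theta^{m}}$ in the heavier weight $\omega^{\alpha-1+m,-\alpha+m}$ (which is how the regularity is actually being used here, so \eqref{regularity} should be read in that stronger sense), or by invoking a sharper Jacobi projection estimate that is compatible with the weight actually available from Theorem \ref{thm:regularity}. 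Once that compatibility is pinned down, the rest of the argument is immediate.
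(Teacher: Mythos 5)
Your proposal coincides with the paper's own (implicit) argument: the paper gives no separate proof of this theorem beyond substituting the regularity statement of Theorem \ref{thm:regularity} into the bound \eqref{error estimate} of Theorem \ref{Theorem:fully error estimate} and absorbing the assumed bound on $(1-\theta)\frac{\partial^{k+1}\varphi}{\partial t^{k+1}}$ into the constant, which is exactly what you do. The weight discrepancy you flag is real and is glossed over in the paper: since $0<1-\theta<1$ and $\alpha-1+m<1-\alpha+m$, one has $\omega^{\alpha-1+m,-\alpha+m}\ge\omega^{1-\alpha+m,-\alpha+m}$ pointwise, so the norm appearing in \eqref{error estimate} dominates the one controlled by \eqref{regularity} (the comparison runs in the unfavorable direction near $\theta=1$), and closing this gap --- e.g.\ by checking that the Appendix argument actually controls the heavier weight, or by sharpening the projection estimate \eqref{eq:prj:01}, as you suggest --- is a refinement the paper itself does not carry out.
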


\begin{remark}
%We observe from \eqref{regularity} that the solution of the EPDE possesses a high regularity with respect to $\theta$ provided that the EPDE is well-posed. 
%Furthermore, the regularity result \eqref{regularity} does not depend on the formula of the right hand side $F(t, \phi(t))$ of the original fractional ODE \eqref{fractional ODE}.
Theorem \ref{Theorem:fully error estimate2} indicates that we have exponential convergence for the spectral approximation in the $\theta$ direction. This means that we only need a small number of nodes for the parameter $\theta$ space to obtain a high accuracy.
\end{remark}
\section{Numerical test}\label{sec:4}
We now present several numerical examples to demonstrate the effectiveness of the present algorithm and verify the theoretical result concerning the stability and convergence.

\subsection{Stable  region}\label{sec:6}
We begin by showing the stable region of the present numerical scheme for the linear problem
\begin{equation*}
    \frac{d^{\alpha}\phi}{dt^{\alpha}} (t) = -\lambda \phi(t),\quad t\in [0,T].
\end{equation*}
We set the numerical scheme \eqref{fully scheme} as $\mathbb{E}$. We determine the ``stable region" to quantify when the scheme \eqref{fully scheme} is stable for the above problem, and we define the ``stable region" of \eqref{fully scheme} as 
$$\widehat{\Omega} := \{\sigma:\rho(\mathbb{E}(\sigma))<1,\sigma\in \mathbb{C}\}, \text{ where } \sigma = -\Delta t \lambda = x + {\rm i} y.  $$

For the sake of simplicity, here we only show the contours of the BDF-3 scheme. We set $T = 1, \,N = 10^{2},\, M = 30$. We show in Figure \ref{Implicit contours} the contours of $\rho(\sigma)$ for different values of fractional order $\alpha = 0.2,\,0.4,\,0.6,\,0.8$. Observe that the stable region for the BDF-$3$ scheme \eqref{fully scheme} decreases as the values of fractional order $\alpha$ increase.
In addition, it shows that the scheme is stable while $\lambda >0$, which is consistent with our theoretical result (i.e., Theorem \ref{stability of fully discrete scheme}). 
\begin{figure}[htbp]
  \centering
  \subfigure[$\alpha = 0.2$]
  {\begin{minipage}{0.225\linewidth}
			\centering
			\includegraphics[width=\linewidth]{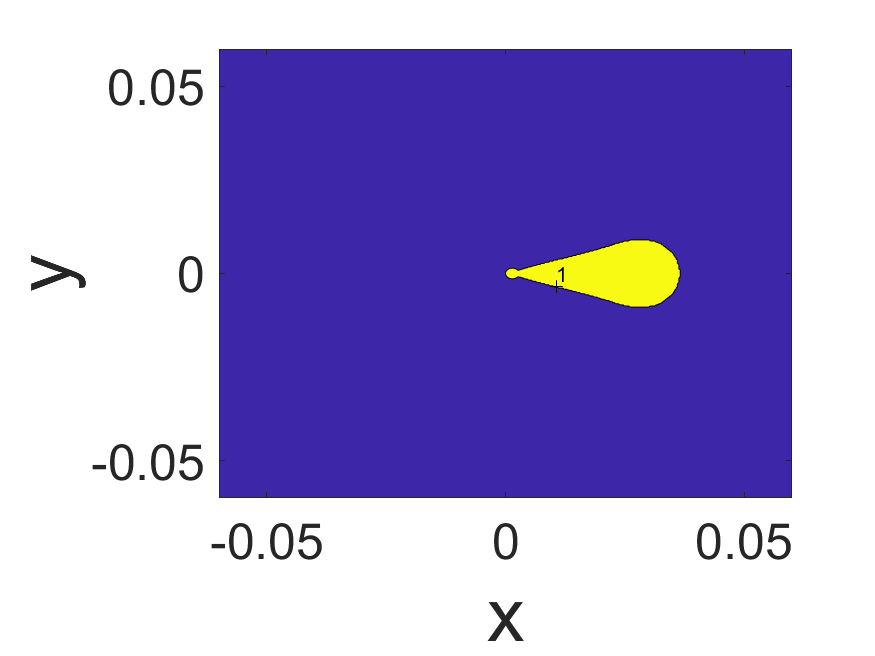}
		\end{minipage}}
  \subfigure[$\alpha = 0.4$\label{Im4}]
	{
		\begin{minipage}{0.225\linewidth}
			\centering    
			\includegraphics[width=\linewidth]{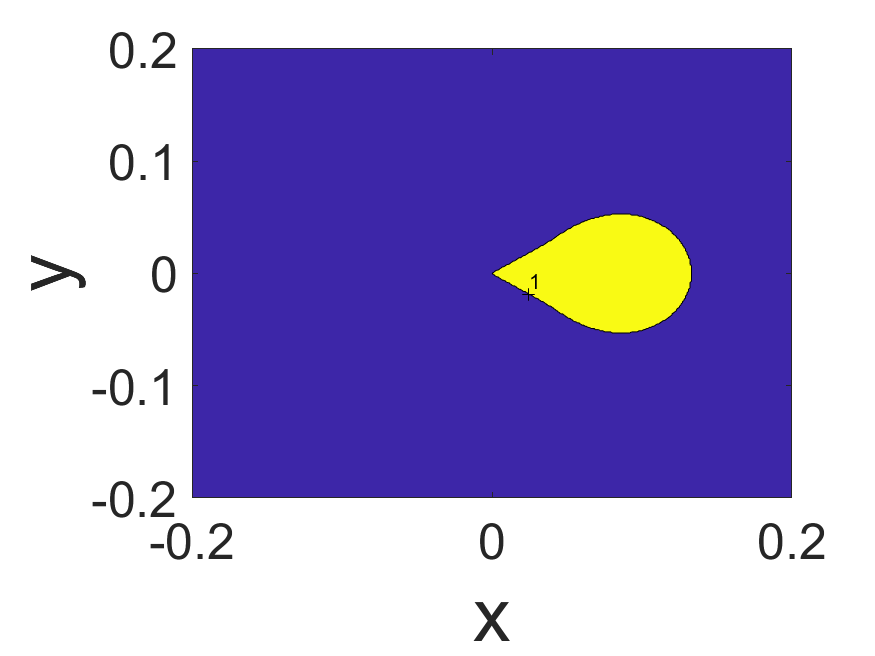} 
		\end{minipage}
	}
 \subfigure[$\alpha = 0.6$\label{Im6}] 
	{
		\begin{minipage}{0.225\linewidth}
			\centering
			\includegraphics[width=\linewidth]{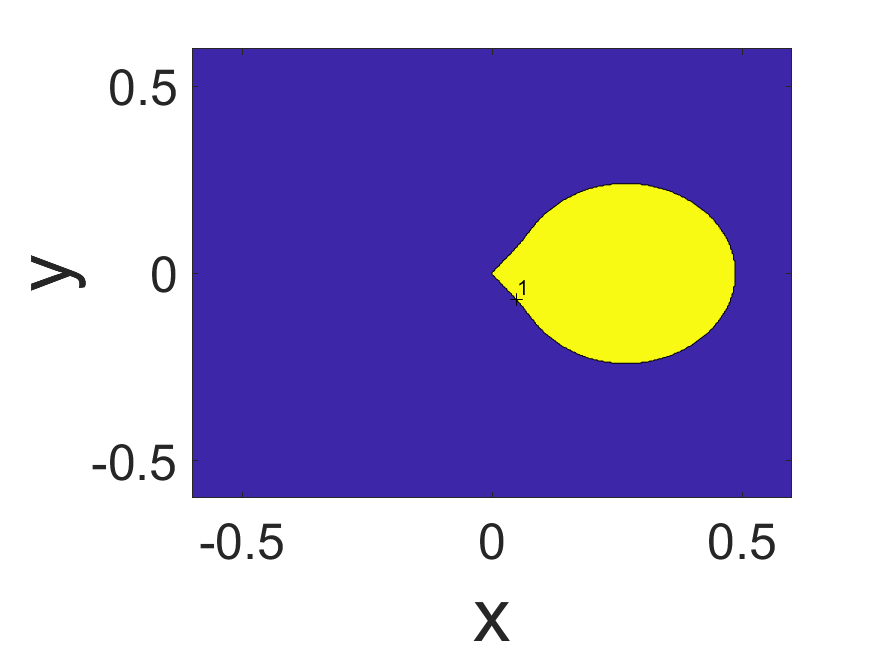}
		\end{minipage}
	}\subfigure[$\alpha = 0.8$\label{Im8}] 
	{
		\begin{minipage}{0.225\linewidth}
			\centering
			\includegraphics[width=\linewidth]{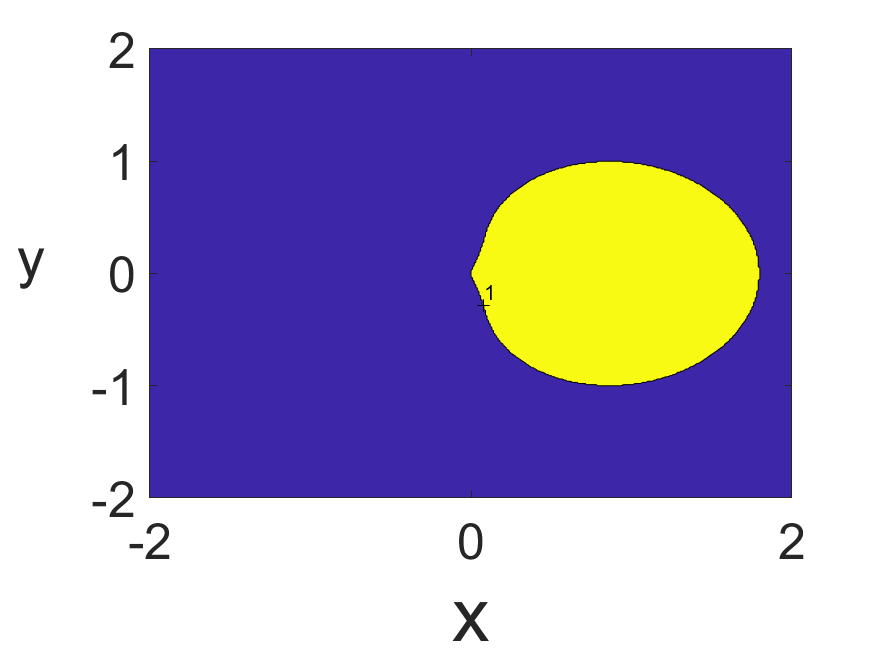}
		\end{minipage}
	}
  \caption{Contours of $\rho(\sigma)$ for the BDF-$3$ scheme  \eqref{fully scheme} for different values of fractional order $\alpha$. The blue region is the ``stable region".} \label{Implicit contours}
\end{figure}
\subsection{Accuracy test and long time simulation}\label{sec:5}
In this subsection, we present several numerical results to demonstrate the effectiveness of the proposed scheme \eqref{fully scheme} for both linear and nonlinear cases. 
\subsubsection{Linear case}
\begin{example}\label{example1}
We first consider the linear problem. In particular, we consider 
    \begin{equation}\label{pb1}
    \frac{d^{\alpha}\phi}{d t^{\alpha}} = -\lambda\phi(t) + f(t),\quad t\in[0,T],\quad \phi(0) = \phi_{0}.
\end{equation}
We shall consider the following three cases:
\begin{itemize}
\item Case I: $f(t) = \Gamma(1+\alpha)$, $\lambda = 0$. In this case, the exact solution is given by $\phi(t) = t^\alpha$.
\item Case II: $f(t) = 0,\; \lambda =1$. In this case, the exact solution is given by 
\begin{equation*}
    \phi(t) = E_{\alpha}(-\lambda t^{\alpha}),
\end{equation*}
where $E_{\alpha}(t)$ is the Mittag-Leffler function defined by  $E_{\alpha}(t) = \sum\limits_{m = 0}^{\infty}\frac{t^{m}}{\Gamma(\alpha m+1)}$.
\item Case III: $f(t) = \sin(t)$, $\lambda = 1$, and the initial value is $\phi_{0}= 1$.
\end{itemize}
\end{example}
We first show the accuracy of the spectral approximation for the $\theta$ direction (i.e., the parametric space). 
Here we set $\Delta t$ to be small enough, and we point out that for case III, we use the numerical solutions obtained by using small enough time steps as the reference solution.
We show in Figure \ref{Fig:ex1:theta} the accuracy with respect to $M$ in semi-log scale for the above three cases with different values of the fractional order $\alpha = 0.2,\, 0.8$ at time $T = 1, 20$. 

Observe that we obtain \emph{exponential convergence} for all tests, which is in agreement with the theoretical result (i.e., Theorem \ref{Theorem:fully error estimate}). This also verify the regularity result \eqref{regularity}, namely, the solution possesses a sufficient high regularity with respect to $\theta$ to deliver the spectral accuracy. It indicates that we can fix \emph{a small number} of nodes (i.e., a small value of $M$) for the spectral discretization. Therefore, we only need to solve \emph{a few number of independent integer-order ODEs} by using the characteristic decomposition (see Algorithm \ref{fast algorithm}).

\begin{figure}[htbp]
	\centering
 % \subfigure[Case I: $\alpha = 0.2$\label{1a}] 
	{
		\begin{minipage}{0.45\linewidth}
			\centering
			\includegraphics[width=\linewidth]{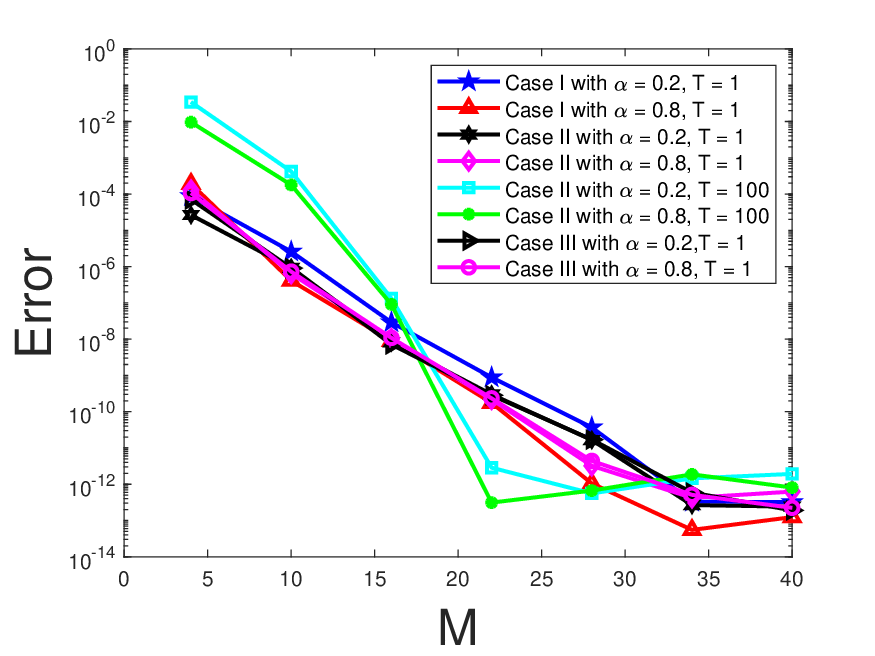}
		\end{minipage}
	}
 \caption{Example \ref{example1}. Convergence results for the spectral approximation of the extended $\theta$ direction for $\alpha = 0.2,\, 0.8$ and different times $T$. Here we set $\Delta t$ to be small enough. We obtain spectral accuracy for all test cases.} 
	 \label{Fig:ex1:theta}
\end{figure}

\begin{remark}
We remark that for the resulting $M$ independent integer-order ODEs, we can employ any efficient and high-order time discretization scheme. For instance, the BDF-$k$ schemes used in this paper or the spectral deferred method used in \cite{Chen2022}.
Consequently, by applying the BDF-$k$ scheme and fixing the value of $M$, we have that \emph{the computational cost and the storage of the fully discretization scheme are $O(N)$ and $O(1)$, respectively}, where $N$ is the number of time steps.
Additionally, we address the long-time simulation issue. 
\end{remark}

Now, let us show the convergence of the BDF-$k$ schemes. In particular, we consider the BDF-$k$ schemes with $k = 3,4,5$. 
To this end, encouraged by the previous observation, we set $M = 30$ for the spectral approximation, which is large enough to neglect the error of the spectral approximation. 

The convergence results for the three cases and different values of fractional order $\alpha$ at time $T = 1$ or $T = 20$ are shown in Figure \ref{Fig:ex1:time}, showing that the expected convergence orders are obtained for all the tests. 
The convergence result also holds for the long-time simulation; see Figure \ref{1f} for case II with $T = 20.$ 
These convergence results agree with Theorem \ref{Theorem:fully error estimate}.

\begin{remark}
We point out here that it is hard to design a unified high-order scheme with low computational cost and storage for all the cases considered above. For example, in \cite{ChengShenWang}, the spectral accuracy is obtained for case I, but fails for cases II and III. 
The fast convolution type method (see \cite{Lubich2002, Lubich2006, Jiang2017}) resolved the issues of computational cost and storage, however, it is not conveniently extended to high-order schemes.
However, we obtain in our work high-order schemes for all these cases at low computational cost and storage, and it is straightforward to design other ``good" schemes for the present EPDE.
\end{remark}

\begin{figure}[htbp]
	\centering
 \subfigure[Case I: $T = 1$\label{1b}] 
	{
		\begin{minipage}{0.45\linewidth}
			\centering
			\includegraphics[width=\linewidth]{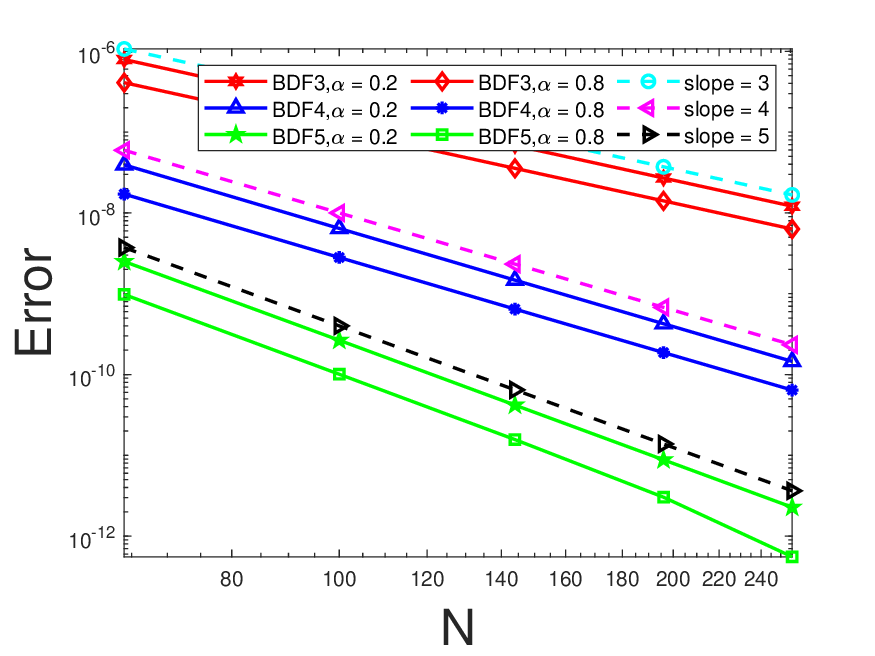}
		\end{minipage}
	}
	\subfigure[Case II: $T = 1$\label{1c}]
	{
		\begin{minipage}{0.45\linewidth}
			\centering    
			\includegraphics[width=\linewidth]{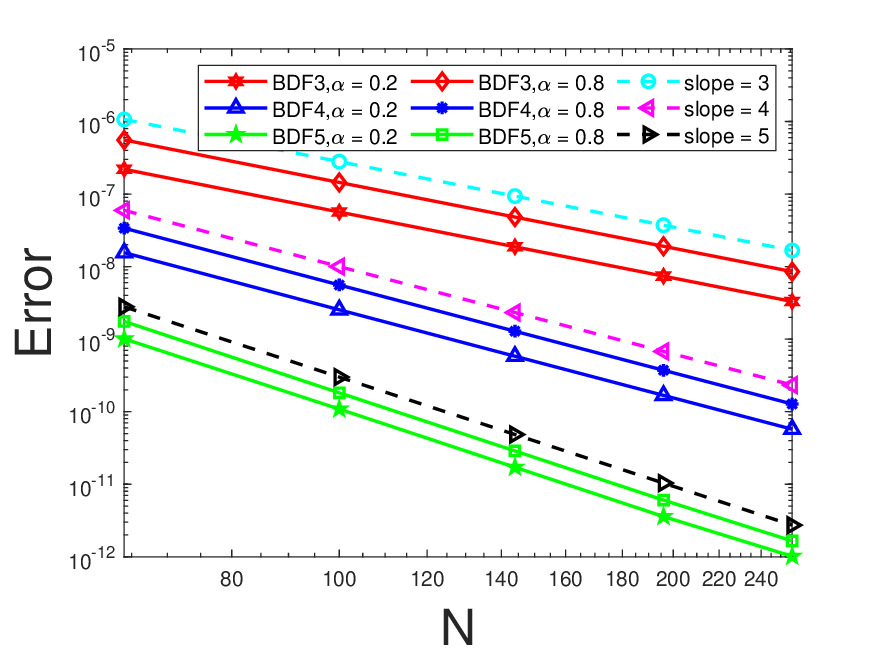} 
		\end{minipage}
	}

 \subfigure[Case II: $T = 20 $ \label{1f}] 
	{
		\begin{minipage}{0.45\linewidth}
			\centering
			\includegraphics[width=\linewidth]{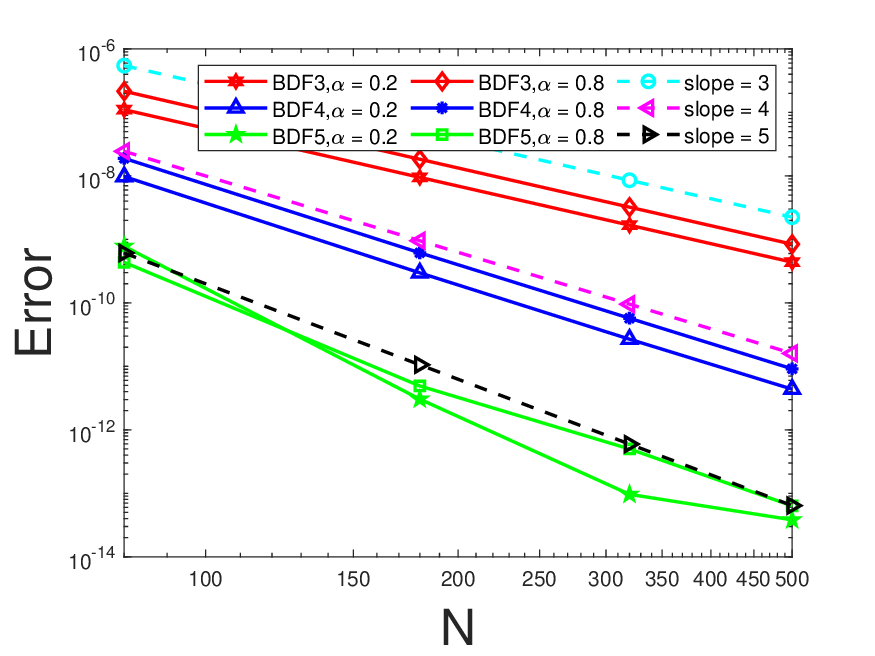}
		\end{minipage}
  }
  \subfigure[Case III: $T = 1$ \label{1g}] 
	{
		\begin{minipage}{0.45\linewidth}
			\centering
			\includegraphics[width=\linewidth]{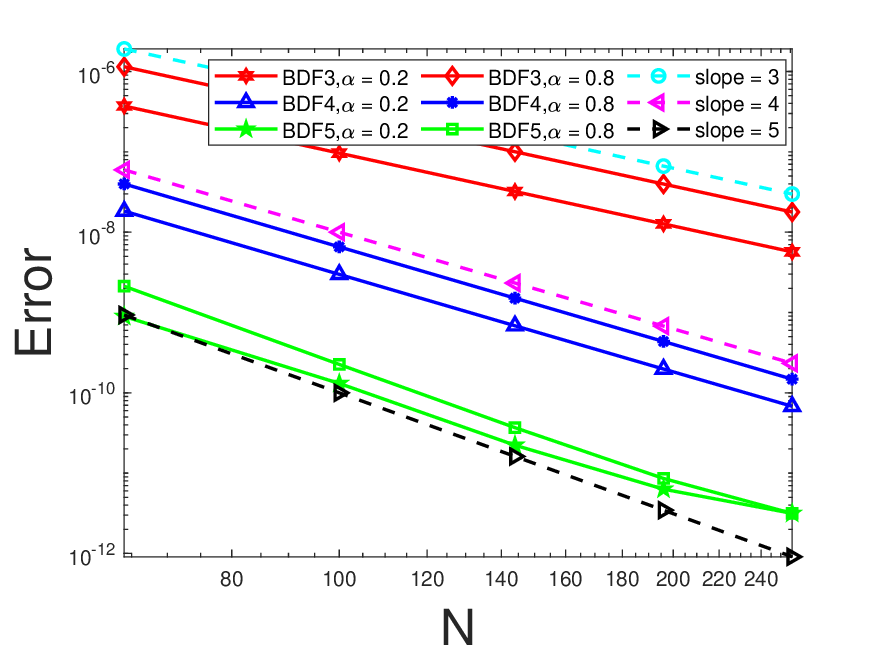}
		\end{minipage}
  }

 \caption{Example \ref{example1}. Convergence results for the BDF-$k$ ($k = 3,4, 5$) schemes for different values of fractional order $\alpha = 0.2,\, 0.8$ at $T=1$ or $T=20$. Here we set $M = 30$. The results are consistent with Theorem \ref{Theorem:fully error estimate}.} 
	 \label{Fig:ex1:time}
\end{figure}
\begin{example}\label{example2}
We now consider the nonlinear case with 
\begin{equation}
    F(t,\phi(t)) = -\lambda \phi^{3}(t) + f(t).
\end{equation}
We consider the following two cases:
\begin{itemize}
    \item Case IV: $\lambda = 1$, $f(t) = {\Gamma(3+\alpha)}t^{2}/2 + \lambda t^{6 + 3\alpha}$, and the exact solution is taken as $\phi(t) = t^{2+\alpha}$.
    \item Case V: $f(t) = 0$,
    $\lambda = 1$, and the initial value is $\phi_{0} = 1$.
\end{itemize}
\end{example}

Similarly to Example \ref{example1}, we first show the accuracy for the spectral approximation in the $\theta$ direction. To this end, we set $\Delta t$ to be small enough. For Case V, we use the numerical solution obtained by using a small enough time step $\Delta t$ and $M=50$ as the reference solution. 
For the nonlinear term, we use the Picard iteration and set the tolerance and the max iteration number to be $10^{-15}$ and $100$, respectively. 
The errors versus $M$ in semi-log scale for both cases with different values of $\alpha = 0.3,0.7$ and $T=1,100$ are shown in Figure \ref{4a}. 

We observe again that the spectral accuracy is obtained for all tests,
% which is consistent with the regularity result \eqref{regularity}, 
indicating again that only a small fixed value of $M$ is needed for the discretization of the extended parametric space. 

Consequently, we shall solve $M$ independent integer systems for each Picard iteration. 
The convergence results for the BDF-$k$ ($k = 3,4,5$) schemes with different values of $\alpha = 0.3,0.7$ and $T=1,100$ are shown in Figure \ref{4b}-\ref{4d}. Here we set $M = 30$.

We observe expected convergence order (says third, forth and fifth convergence order) for the BDF-$k$ ($k=3,4,5$) schemes for Case IV (see Figure \ref{4b}).
Unfortunately, this fails for Case V, for which we only have the convergence rate of about second order and third order for $\alpha = 0.3$ and $\alpha = 0.7$, respectively, (see Figure \ref{4c} and \ref{4d}). This is due to the lack of regularity with respect to $t$. We shall discuss and resolve this issue in a future work.

\begin{figure}[htbp]
	\centering
\subfigure[Case IV and V, error vs $M$ \label{4a}]
{
		\begin{minipage}{0.45\linewidth}
			\centering    
			\includegraphics[width=\linewidth]{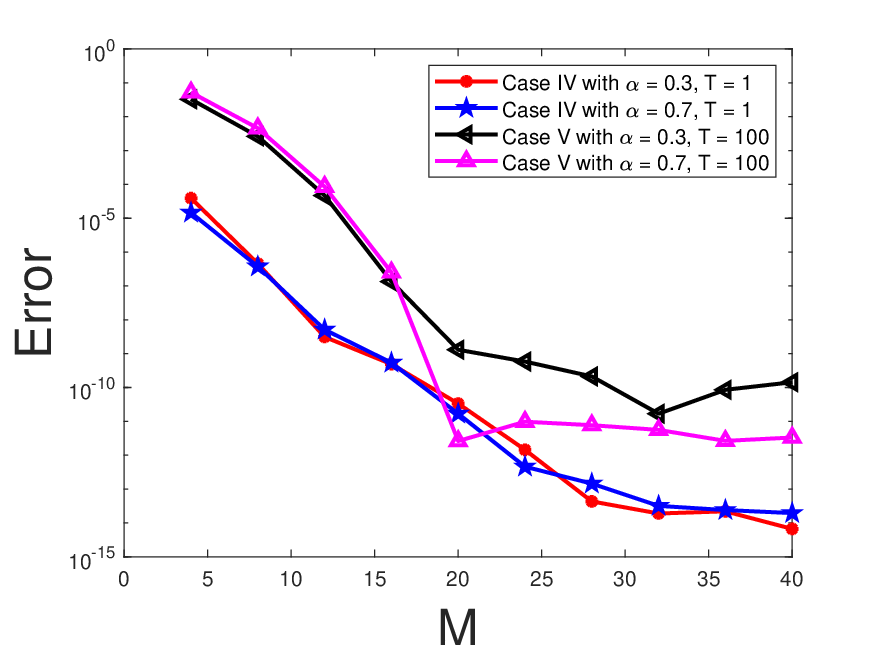} 
		\end{minipage}
	}
 \subfigure[Case IV: $ T = 1$  \label{4b}] 
	{
		\begin{minipage}{0.45\linewidth}
			\centering
			\includegraphics[width=\linewidth]{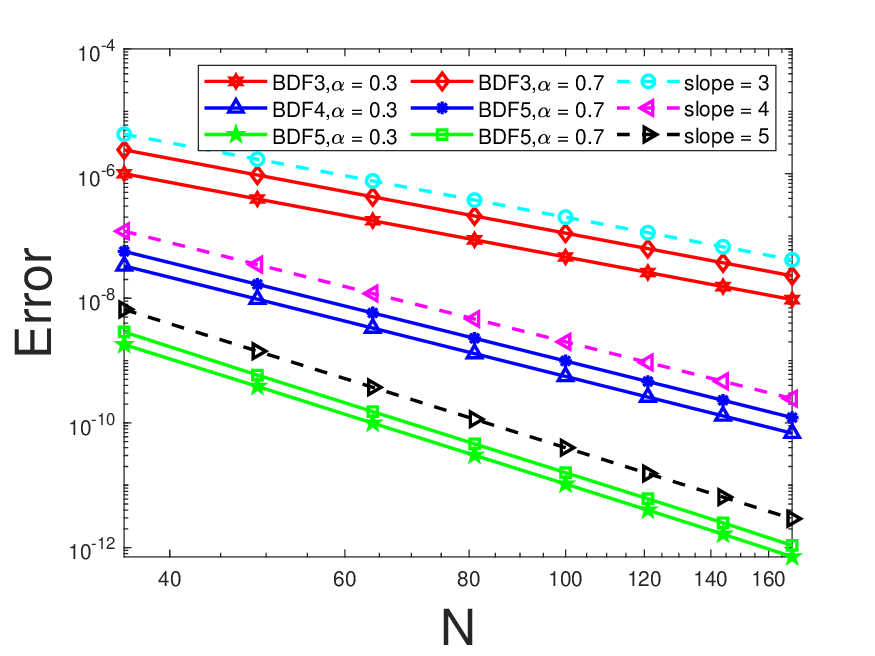}
		\end{minipage}
	}

 \subfigure[Case V: $\alpha = 0.3$  \label{4c}] 
	{
		\begin{minipage}{0.45\linewidth}
			\centering
			\includegraphics[width=\linewidth]{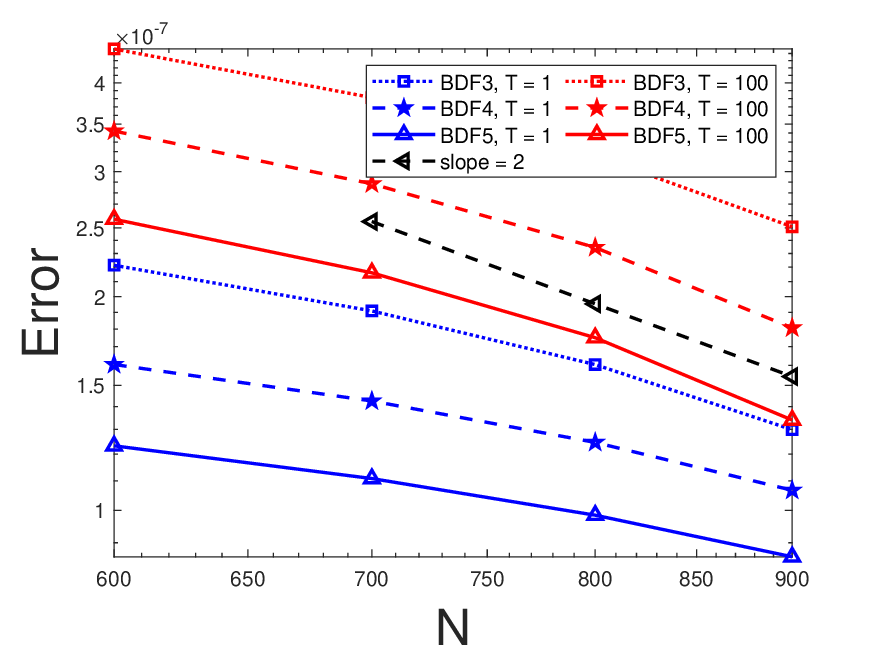}
		\end{minipage}
  }
  \subfigure[Case V: $\alpha = 0.7$  \label{4d}] 
	{
		\begin{minipage}{0.45\linewidth}
			\centering
			\includegraphics[width=\linewidth]{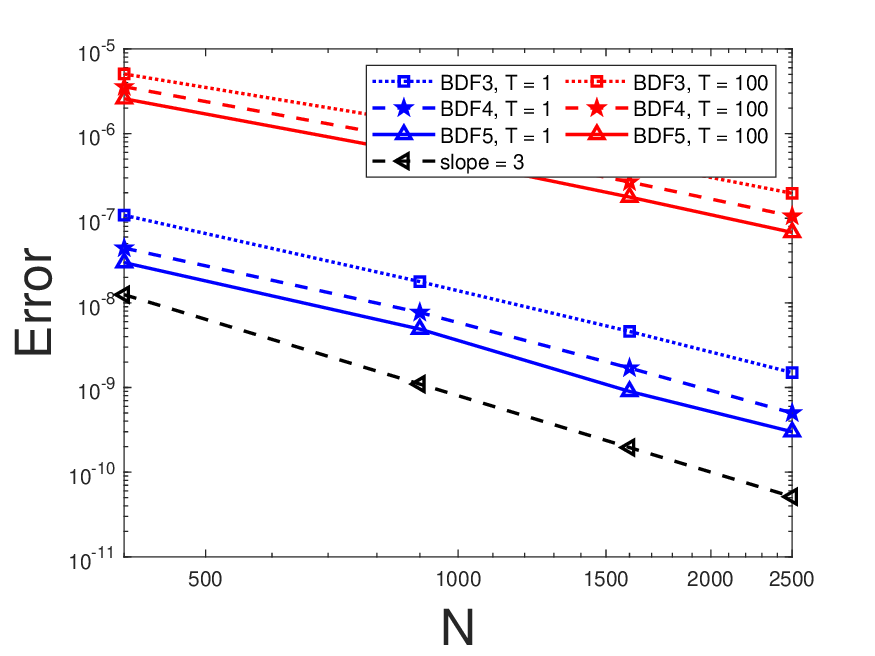}
		\end{minipage}
  }
	\caption{Example \ref{example2}. (a): Convergence results for the spectral approximation of the extended $\theta$ direction for $\alpha = 0.3,\, 0.7$ and $T = 1,100$. (b)-(d): Convergence results for the BDF-$k$ ($k = 3,4, 5$) schemes for  $\alpha = 0.3,\, 0.7$ and $T =1, 100$, here we set $M = 30$.}
 \label{Fig:ex2:time}
\end{figure}

\section{Conclusion}\label{sec:5}
In this paper, we propose a novel efficient and arbitrary high-order numerical scheme for TFDEs. We show the equivalence between TFDEs and the integer-order \emph{Extended Parametric Differential Equation} (EPDE). We  establish the stability of EPDE and show that it exhibits high regularity with respect to the extended direction $\theta$. Furthermore, we demonstrate the high regularity by employing the Jacobi spectral collocation method, showing that the spectral accuracy is obtained. 
Therefore, we could use a small number of collocation nodes to obtain a sufficiently high accuracy for the $\theta$-direction so that we could neglect the error generated by the spectral collocation method.
Consequently, by using the characteristic decomposition, we obtain $M$ (where $M$ is the number of collocation nodes) independent integer-order ODEs, which can be solved by any traditional schemes. 
In this work, we apply the BDF-$k$ scheme for the resulting $M$ independent ODEs. 
It turns out by fixing the value of $M$ that the computational cost and storage requirement of our proposed algorithm are essentially the same as those for ODEs, namely, the computational cost and the storage requirement are $O(N)$ and $O(1)$, respectively. In addition, we show the stability of the proposed schemes and give rigorous error estimates. Several numerical tests are presented showing that the expected convergence results are obtained, and the numerical results are consistent with the theoretical results.
  
It is also worth mentioning that the ideas presented in this paper can be extended to many other cases of non-local operators, such as fractional Laplace operators, non-local operators and tempered fractional order operators and so on. In future, we shall consider FDEs with general kernel and fractional integral equations. Meanwhile, we shall apply the proposed method to solve fractional PDEs, such as fractional sub-diffusion equations, fractional phase-field models \cite{Mark2017,Mark2019,Tang_PF}, etc. 
\section*{Appendix: Proof of Theorem \ref{thm:regularity}}

We give the proof of Theorem \ref{thm:regularity}, namely, we show the regularity result with respect to $\theta$ in this appendix.

\begin{proof}
We shall prove the regularity result by induction. 
Consider EPDE \eqref{equation} with $F(t,\mathcal{C}[\varphi](t)) = -\lambda\mathcal{C}[\varphi](t) + f(t)$. We then obtain from  \eqref{eq:sta:cont} that
\begin{equation*}\label{regularity1}
    \varphi\in L^{2}\left(0,T;L_{\omega^{1-\alpha,-\alpha}}^{2}(\Omega)\right).
\end{equation*}
Moreover, by multiply both sides of \eqref{equation} with $1-\theta$, taking the derivative with respect to $t$, and  letting $\widetilde{\varphi} = \frac{\partial \varphi}{\partial t}$, we obtain 
\begin{equation}\label{variable_transform}
    (1-\theta)\frac{\partial \widetilde{\varphi}}{\partial t} + \theta\widetilde{\varphi} + \lambda\mathcal{C}[\widetilde{\varphi}](t) = f'(t). 
\end{equation}
Since $f'(t)\in L^{\infty}(0,T)$, similar, we have  \eqref{eq:sta:cont}
\begin{equation*}\label{regularity2}
    \widetilde{\varphi}\in L^{2}\left(0,T;L_{\omega^{1-\alpha,-\alpha}}^{2}(\Omega)\right),\; 
    \text{i.e., } 
     \frac{\partial \varphi}{\partial t}\in L^{2}\left(0,T;L_{\omega^{1-\alpha,-\alpha}}^{2}(\Omega)\right).
\end{equation*}

Now we assume for EPDE \eqref{equation} that 
\begin{equation}\label{eq:induction:ass1}
    \frac{\partial^{m}\varphi}{\partial \theta^{m}}, \frac{\partial^{m+1}\varphi}{\partial t\partial \theta^{m}}\in L^{2}\left(0,T;L_{\omega^{1-\alpha+m,-\alpha+m}}^{2}(\Omega)\right).
\end{equation}
Next, we should show 
\begin{equation*}
    \frac{\partial^{m+1}\varphi}{\partial \theta^{m+1}}, \frac{\partial^{m+2}\varphi}{\partial t\partial \theta^{m+1}}\in L^{2}\left(0,T;L_{\omega^{2-\alpha+m,1-\alpha+m}}^{2}(\Omega)\right).
\end{equation*}

By multiply $1-\theta$ on both sides of \eqref{equation} and taking the $m+1$-th derivative with respect to $\theta$, we obtain 
    \begin{equation*}\label{regularity:01}
        \begin{aligned}
            (1-\theta)\frac{\partial^{m+2} \varphi}{\partial t\partial \theta^{m+1}} + \theta\frac{\partial^{m+1} \varphi}{\partial \theta^{m+1}} = (1+m)\left(\frac{\partial^{m+1} \varphi}{\partial t\partial \theta^{m}} - \frac{\partial^{m} \varphi}{\partial \theta^{m}}\right).
        \end{aligned}
    \end{equation*}
Multiplying $\theta\frac{\partial^{m+1}\varphi}{\partial \theta^{m+1}}$ to both sides of the above equation and integrating with respect to $\theta$ from 0 to 1 and using Cauchy-Schwarz and Young inequalities gives 
\begin{equation*}\label{regularity:02}
\begin{aligned}
     &\frac{1}{2}\frac{d}{dt}\int_{0}^{1}\left(\frac{\partial^{m+1}\varphi}{\partial \theta^{m+1}}\right)^{2}\omega^{2-\alpha+m,1-\alpha+m}d\theta + \int_{0}^{1}\left(\theta\frac{\partial^{m+1}\varphi}{\partial \theta^{m+1}}\right)^{2}\omega^{1-\alpha+m,-\alpha+m}d\theta\\
    = & (m+1)\int_{0}^{1}\left(\frac{\partial^{m+1} \varphi}{\partial t\partial \theta^{m}}  - \frac{\partial^{m} \varphi}{\partial \theta^{m}}\right)\frac{\partial^{m+1} \varphi}{\partial \theta^{m+1}}\theta\omega^{1-\alpha+m,-\alpha+m}d\theta\\
   \le  & \int_{0}^{1} \left\{ \frac{(m+1)^{2}}{4} \left[\left(\frac{\partial^{m+1} \varphi}{\partial t\partial \theta^{m}}\right)^{2} + \left(\frac{\partial^{m} \varphi}{\partial \theta^{m}}\right)^{2}\right] + \left(\theta\frac{\partial^{m+1} \varphi}{\partial \theta^{m+1}}\right)^{2} \right\} \omega^{1-\alpha+m,-\alpha+m}d\theta.
\end{aligned}
\end{equation*}
If $\frac{\partial^{m+1}\varphi}{\partial \theta^{m+1}}(0,\theta)\in L_{\omega^{2-\alpha+m,1-\alpha+m}}^{2} (\Omega)$, we deduce from the above equation and \eqref{eq:induction:ass1} that $\frac{\partial^{m+1}\varphi}{\partial \theta^{m+1}}\in L^{\infty}\left(0,T;L_{\omega^{2-\alpha+m,1-\alpha+m}}^{2}(\Omega)\right).$ Consequently, 
\begin{equation*}
    \frac{\partial^{m+1}\varphi}{\partial \theta^{m+1}}\in L^{2}\left(0,T;L_{\omega^{2-\alpha+m,1-\alpha+m}}^{2}(\Omega)\right).
\end{equation*}

We are left to show $\frac{\partial ^{m+2} \varphi}{\partial t\partial \theta^{m+1}}\in L^{2}\left(0,T;L_{\omega^{2-\alpha+m,-\alpha+m+1}}^{2}(\Omega)\right)$. 
Note that if we apply the induction to \eqref{variable_transform}, then by the assumption, we have 
\begin{equation*}
    \frac{\partial^{m}\widetilde{\varphi}}{\partial \theta^{m}}, \frac{\partial^{m+1}\widetilde{\varphi}}{\partial t\partial \theta^{m}}\in L^{2}\left(0,T;L_{\omega^{1-\alpha+m,-\alpha+m}}^{2}(\Omega)\right).
\end{equation*}
and by using the same procedure, we have from  \eqref{variable_transform} that 
\begin{equation*}
    \frac{\partial ^{m+1}\widetilde{\varphi}}{\partial \theta^{m+1}}\in L^{2}\left(0,T;L_{\omega^{2-\alpha+m,1-\alpha+m}}^{2}(\Omega)\right)\, \text{i.e., } \frac{\partial ^{m+2} \varphi}{\partial t\partial \theta^{m+1}}\in L^{2}\left(0,T;L_{\omega^{2-\alpha+m,1-\alpha+m}}^{2}(\Omega)\right).
\end{equation*}
This is the complete proof.
\end{proof}

%\section*{Acknowledgments}
%D. Ling would like to acknowledge support by National Natural
%Science Foundation of China grant 12101486, China
%Postdoctoral Science Foundation grant 2020M683446 and the High-performance Computing Platform at Xi'an Jiaotong University. Z. Mao would like to acknowledge support by the Fundamental Research Funds for the Central Universities (20720210037).

\section*{Statements and Declarations}
\begin{itemize}
    \item Competing Interests\\
    The authors have no relevant financial or non-financial interests to disclose.
    \item Data Availability\\
    The datasets generated during and/or analysed during the current study are available from the corresponding author on reasonable request.
\end{itemize}

%%===========================================================================================%%
%% If you are submitting to one of the Nature Portfolio journals, using the eJP submission   %%
%% system, please include the references within the manuscript file itself. You may do this  %%
%% by copying the reference list from your .bbl file, paste it into the main manuscript .tex %%
%% file, and delete the associated \verb+\bibliography+ commands.                            %%
%%===========================================================================================%%

%\bibliography{sn-bibliography}% common bib file
%% if required, the content of .bbl file can be included here once bbl is generated
%%\input sn-article.bbl

\end{document}